\documentclass[12pt,article]{article}
\usepackage[margin=1in]{geometry}
\usepackage{amsmath,amsthm,amssymb}
\usepackage{IEEEtrantools}
\usepackage{epsfig}
\usepackage[]{graphicx}
\usepackage{epstopdf}
\usepackage[colorlinks]{hyperref}
\usepackage{listings}
\usepackage{authblk}
\lstset{frame=single,breaklines=no,numbers=none,basicstyle=\footnotesize}

\DeclareMathOperator{\e}{e}
\DeclareMathOperator{\diam}{diam}

\DeclareMathOperator{\supp}{supp}

\newtheorem{theorem}{Theorem}[section]
\newtheorem{lemma}[theorem]{Lemma}
\newtheorem{remark}[theorem]{Remark}

\newtheorem{definition}[theorem]{Definition}
\newtheorem{problem}[theorem]{Problem}

\title{Sharp Fourier decay estimates for measures supported on the well-approximable numbers}
\author[1]{Robert Fraser}
\author[2]{Thanh Nguyen}
\affil[1]{Wichita State University}
\affil[2]{Indiana University}
\date{September 2024}

\begin{document}

\maketitle

\begin{abstract}
We construct a measure on the well-approximable numbers whose Fourier transform decays at a nearly optimal rate. This gives a logarithmic improvement on a previous construction of Kaufman.
\end{abstract}

\section{Introduction and Background}

\subsection{Harmonic analysis on fractal sets}
An interesting class of problems in harmonic analysis involves determining information about the Fourier transform of a compactly supported measure $\mu$ given information about the support $\supp \mu$ of the measure $\mu$. A standard result in this area is Frostman's lemma, which states that if $E$ is a set of Hausdorff dimension $s$, then for any $t < s$, there exists a Borel probability measure $\mu_t$ supported on $E$ satisfying the condition that
\begin{equation}\label{eq:l2averageFourier}
\int_{\xi \in \mathbb{R}^n} |\hat \mu_t(\xi)|^2 (1 + |\xi|)^{-t} < \infty.
\end{equation}
Frostman's lemma states that, up to an $\epsilon$-loss in the exponent, the set $E$ supports a measure whose Fourier transform decays like $|\xi|^{-s/2}$ in an $L^2$-average sense.

This version of Frostman's lemma motivates the definition of Fourier dimension. The \text{Fourier dimension} of a set $E \subset \mathbb{R}^n$ is the supremum of those values $0 \leq s \leq n$ such that $E$ supports a Borel probability measure $\mu_s$ satisfying the pointwise condition
\begin{equation}\label{eq:pointwiseFourier}
|\hat \mu_s(\xi)| \lesssim (1 + |\xi|)^{-s/2}.
\end{equation}
Observe that the condition \eqref{eq:pointwiseFourier} for some value of $s$ implies equation \eqref{eq:l2averageFourier} for any $t < s$. However, there is no reason to expect a converse statement to hold; in fact, if $E$ is the usual middle-thirds Cantor set, there is no Borel probability measure $\mu$ on $E$ such that $|\hat \mu(\xi)| \to 0$ as $|\xi| \to \infty$. A measure $\mu$ such that $|\hat \mu(\xi)| \to 0$ as $\xi \to \infty$ is called a \textbf{Rajchman measure}. On the opposite extreme, there are a number of examples of sets $E$ of Hausdorff dimension $s$ supporting Borel probability measures satisfying \eqref{eq:pointwiseFourier} for all $t < s$. Such sets are called \textbf{Salem sets}.

If $s = n - 1$, a simple stationary phase calculation shows that the usual surface measure on the sphere satisfies the condition
\begin{equation}\label{eq:sphereFourier}
|\hat \mu(\xi)| \leq (1 + |\xi|)^{-(n-1)/2}.
\end{equation}
This well-known computation can be found in the textbooks of Wolff \cite{Wolff03} and Mattila \cite{Mattila15}.
If $n = 1$ and $0 < s < 1$, the first examples of Salem sets were given by Salem \cite{Salem51} via a random Cantor set construction. A later random construction was given by Kahane \cite{Kahane66}, who shows that if $\Gamma : [0,1] \to \mathbb{R}^n $ is a Brownian motion and $E \subset [0,1]$ is a set of Hausdorff dimension $s$, then $\Gamma(E)$ will almost surely have Fourier dimension equal to $2s$. Kahane \cite{Kahane66B} also constructed Salem sets using random Fourier series whose coefficients are given by Gaussian random variables.

The first explicit, deterministic example of a Salem set of fractional dimension in $\mathbb{R}$ was given by Kaufman \cite{Kaufman81}. For an exponent $\tau$, the well-approximable numbers $E(q^{-\tau})$ are defined by
\[E(q^{-\tau}) = \left\{x : \left| x - \frac{p}{q}  \right| \leq q^{-\tau} \, \text{for infinitely many pairs of integers $(p,q)$}\right\}.\]
A classical result of Jarník \cite{Jarnik29} and Besicovitch \cite{Besicovitch28} states that the Hausdorff dimension of $E(q^{-\tau})$ is equal to $\frac{2}{\tau}$. Kaufman shows that $E(q^{-\tau})$ supports a Borel probability measure $\mu$ satisfying
\[\left|\hat \mu(\xi)\right| \lesssim (1 + |\xi|)^{-1/\tau} o(\log |\xi|).\]
Bluhm \cite{Bluhm96} provides an exposition of Kaufman's argument to prove a slightly weaker result in which the $o(\log |\xi|)$ term is replaced by $O(\log |\xi|)$.
More generally, given a function $\psi : \mathbb{N} \to [0, \infty) $, it is of interest to consider the set of $\psi$-approximable numbers
\[E(\psi) = \left\{x : \left| x - \frac{p}{q}  \right| \leq \psi(q) \, \text{for infinitely many pairs of integers $(p,q)$}\right\}.\]
Hambrook \cite{Hambrook14} obtains lower bounds on the Fourier dimension of such sets in terms of the function $\psi$.
\subsection{Some problems in geometric measure theory}
In this paper, we will consider the question of locating sets $E$ satisfying more precise estimates than \eqref{eq:pointwiseFourier} under the constraint that $E$ has finite Hausdorff measure. As a motivating example, consider the $(n-1)$-dimensional sphere in $\mathbb{R}^n$. This set has positive and finite $(n-1)$-dimensional Hausdorff measure and supports a measure $\mu$ with Fourier transform satisfying \eqref{eq:sphereFourier}. Mitsis \cite{Mitsis02} posed the following problem.

\begin{problem}[Mitsis's problem]\label{prob:mitsis}
For which values of $0 < s < n$ does there exist a measure $\mu$ such that $\mu$ simultaneously satisfies the ball condition
\[\mu(B(x,r)) \sim r^{s} \quad \text{for all $x \in \supp \mu$ and all $r > 0$}\]
and the Fourier decay condition
\[|\hat \mu(\xi)| \leq |\xi|^{-s/2}?\]
\end{problem}

We will consider a related problem. Let $0 \leq s \leq n$. Recall that a subset $E$ of $\mathbb{R}^n$ is said to be an \textbf{$s$-set} if the Hausdorff measure $\mathcal{H}^s(E)$ satisfies $0 < \mathcal{H}^s(E) < \infty$.
\begin{problem}[Fourier transform on $s$-sets]\label{prob:ssetFourier}
For which values of $0 < s < n$ does there exist an $s$-set $E$ supporting a measure $\mu$ such that $\mu$ satisfies the Fourier decay condition
\[|\hat \mu(\xi)| \leq |\xi|^{-s/2}?\]
\end{problem}
Of course, such a set $E$ must be a Salem set of Hausdorff dimension $s$.

This problem can be extended to a question about generalized Hausdorff dimension. Recall that a positive, increasing function $\alpha$ is said to be a \textbf{dimension function} if $\alpha(u) \to 0$ as $u \to 0$. We will say that $E$ is an $\alpha$-set if $0 < \mathcal{H_{\alpha}(E)} < \infty$, where $\mathcal{H}_{\alpha}$ is the generalized Hausdorff measure associated to $\alpha$. The following question generalizes the previous one:
\begin{problem}[Fourier transform on $\alpha$-sets]\label{prob:alphasetFourier}
For which dimension functions $\alpha$ does there exist an $\alpha$-set $E$ supporting a measure $\mu$ such that $\mu$ satisfies the Fourier decay condition
\[|\hat \mu(\xi)| \lesssim \sqrt{\alpha(1/\xi)} \quad \text{for $|\xi| \geq 1$?}\]
\end{problem}

We conjecture that the only such dimension functions $\alpha$ are integer powers $\alpha(u) = u^{-s}$ for integers $0 \leq s \leq n$.

On the other hand, we also wish to pose the problem of determining the optimal Fourier decay estimates for measures supported on the set of well-approximable numbers $E(\psi)$.
\begin{problem}[Fourier decay of measures supported on $E(\psi)$]\label{prob:epsiFourier}
Fix a function $\psi$. For which functions $\Theta$ does there exist a measure $\mu$ supported on $E(\psi)$ such that
\[|\hat \mu(\xi)| \lesssim \Theta(\xi)?\]
\end{problem}
Although we are unable to answer Problems \ref{prob:ssetFourier}, \ref{prob:alphasetFourier}, and \ref{prob:epsiFourier} in this work, we are able to obtain ``near"-answers to all three of these questions if the dimension function $\alpha$ or the approximation function $\psi$ decay at a polynomial rate.
\subsection{Notation}
In this paper, constants are always allowed to depend on the parameters $\tau, \sigma, $ and $\rho$. Any dependence on these parameters will always be suppressed for simplicity of notation.

If $A$ and $B$ are any two quantities, we write $A = O(B)$ or $A \lesssim B$ to imply that $A \leq C B$ for some constant $C$ that does not depend on $A$ or $B$ (but may depend on $\tau, \sigma$, or $\rho$). We write $B \gtrsim A$ to mean the same thing as $A \lesssim B$. If $A \lesssim B$ and $B \lesssim A$, we write $A \sim B$. If the implicit constant in any of these inequalities is allowed to depend on some other parameter such as $\epsilon$, we write $A \lesssim_{\epsilon} B$, $A \gtrsim_{\epsilon} B$, or $A \sim_{\epsilon} B$. 

If $A(x)$ and $B(x)$ are functions of a variable $x$, we write $A(x) \lessapprox B(x)$ if $A(x) \lesssim_{\epsilon}x^{\epsilon} B(x)$ for every $\epsilon > 0$. So, for example, we write 
\[x^3 \exp (\sqrt{\log x}) \log x \log \log x \lessapprox x^3.\]
If $A(x) \lessapprox B(x)$ and $B(x) \lessapprox A(x)$, we write $A(x) \approx B(x)$.
\section{Results}
First, we describe a result in the direction of Problem \ref{prob:epsiFourier}.
\begin{theorem} \label{thm:slowdecay}
Let $\psi(q)$ be an arbitrary nonnegative, decreasing function satisfying the conditions
\begin{equation}\label{eq:taudef}
2 < \lim_{q \to \infty} -\frac{\log(\psi(q))}{\log q} = \tau < \infty.
\end{equation}
Suppose also that there exists $\sigma > 1$ such that $\psi$ satisfies the polynomial-type decay condition
\begin{equation}\label{eq:sigmadef}
\frac{\psi(q_1)}{\psi(q_2)} \geq \left(\frac{q_2}{q_1} \right)^{\sigma} \quad \text{for $q_2 > q_1$ sufficiently large.}
\end{equation}
Suppose further that $1 \leq \chi(q) \leq \log q$ is a nonnegative function that satisfies 
\begin{equation}\label{eq:chicondition}
\sum_{\substack{q=1 \\ q \text{ prime}}}^{\infty} \frac{1}{q\chi(q)} = \infty.
\end{equation}
and also satisfies the subpolynomial-type growth condition for any $\epsilon > 0$:
\begin{equation}\label{eq:chigrowth}
\frac{\chi(q_2)}{\chi(q_1)} < \left(\frac{q_2}{q_1}\right)^{\epsilon} \quad \text{for $q_1, q_2$ sufficiently large depending on $\epsilon$.}
\end{equation}

Then for any increasing function $\omega$ with $\lim_{\xi \to \infty} \omega(\xi) = \infty$, there exists a Borel probability measure $\mu$ supported on a compact subset of the $\psi$-well-approximable numbers satisfying the estimate
\begin{equation}\label{eq:slowdecay}
|\hat \mu_{\chi, \omega}(\xi)| \lesssim \frac{\omega(|\xi|)}{\psi^{-1}(1/|\xi) \chi (\psi^{-1}(1 /|\xi|))} \quad \text{for all $\xi \in \mathbb{R}$}.
\end{equation}
\end{theorem}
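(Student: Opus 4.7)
The plan is to adapt the Kaufman--Bluhm construction, modifying the prime weights to exploit the divergence condition \eqref{eq:chicondition}. First, I would use \eqref{eq:chicondition} to choose scales $Q_1 < Q_2 < \cdots \to \infty$ so that the prime groups $P_k := \{q \in [Q_k, Q_{k+1}) : q \text{ prime}\}$ each satisfy $S_k := \sum_{q \in P_k} 1/(q\chi(q)) \sim 1$. Because $\chi(q) \le \log q$, Mertens' theorem forces the scales to separate at least doubly-exponentially, which is far more than what the Fourier estimates require. The subpolynomial growth condition \eqref{eq:chigrowth} ensures $\chi$ is nearly constant on each $P_k$, so in particular $\sum_{q \in P_k} 1/\chi(q) \sim Q_k$.

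Next, fix a smooth nonnegative even bump $\phi$ with $\supp \phi \subset [-1,1]$ and $\hat\phi(0) = 1$, and set
\[
f_k(x) = c_k \sum_{q \in P_k} \frac{1}{\chi(q)} \sum_{p=0}^{q} \phi\!\left(\frac{x - p/q}{\psi(q)}\right),
\]
with $c_k$ chosen so that $\int f_k = 1$; combining \eqref{eq:sigmadef}, \eqref{eq:taudef}, and the estimate $\sum_{q \in P_k} 1/\chi(q) \sim Q_k$ forces $c_k \sim Q_k^{\tau - 2}$. Using the fact that for $q$ prime the complete sum $\sum_{p=0}^{q-1} e^{-2\pi i p\xi/q}$ equals $q$ when $q \mid \xi$ and vanishes otherwise, one obtains
\[
\hat f_k(\xi) = c_k \sum_{q \in P_k,\, q \mid \xi} \frac{q\,\psi(q)}{\chi(q)}\, \hat\phi(\psi(q)\xi).
\]
At the frequency scale $|\xi| \sim 1/\psi(Q_k)$, the rapid decay of $\hat\phi$ together with \eqref{eq:sigmadef} confines the effective sum to $q \sim Q_k$, and the elementary fact that $\xi$ admits only $O(\log|\xi|/\log Q_k) = O(1)$ prime divisors inside $P_k$ yields the single-scale estimate
\[
|\hat f_k(\xi)| \lesssim \frac{1}{Q_k\, \chi(Q_k)} \sim \frac{1}{\psi^{-1}(1/|\xi|)\, \chi(\psi^{-1}(1/|\xi|))}.
\]

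The candidate measure $\mu$ is the weak-$*$ limit of $\mu_N = Z_N^{-1} f_1 f_2 \cdots f_N\,dx$, where $Z_N$ normalizes $\mu_N$ to be a probability measure. One checks that $Z_N$ stays bounded away from $0$ and $\infty$ by using that within each scale the bumps are essentially disjoint (thanks to $\tau > 2$ in \eqref{eq:sigmadef}) while carrying total $L^1$-mass $1$; this is the standard Cantor-type bookkeeping. The support satisfies $\supp \mu \subset \bigcap_k \bigcup_{q \in P_k,\, 0 \le p \le q} [p/q - \psi(q),\, p/q + \psi(q)] \subset E(\psi)$, because each point of the intersection admits one distinct approximating prime denominator $q \in P_k$ for every $k$. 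For Fourier decay, $\hat\mu_N = \hat f_1 * \cdots * \hat f_N$, and one argues scale by scale at a given $\xi$: identify the scale $k$ with $Q_k \sim \psi^{-1}(1/|\xi|)$, bound $\hat f_k$ by the single-scale estimate, use $|\hat f_j(\xi)| \le 1$ trivially for $j < k$, and apply the rapid decay $|\hat f_j(\xi)| \lesssim_N (\psi(Q_j)|\xi|)^{-N}$ coming from the smoothness of $\phi$ for $j > k$.

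The main obstacle is controlling the convolution estimate in the final step: splicing the single-scale bounds into $\hat\mu_N = \hat f_1 * \cdots * \hat f_N$ produces cumulative factors (in particular from the convolution mass near the frequencies $1/\psi(Q_j)$) that threaten to inflate the pointwise estimate by a slowly-growing function of $|\xi|$. This is exactly what the hypothesis $\omega(\xi) \to \infty$ is designed to absorb: by choosing the scale growth $Q_{k+1}/Q_k$ to outpace $\omega^{-1}$ in an appropriate sense, only $O(\omega(|\xi|))$ scales contribute meaningfully at any given frequency, so the cumulative overhead fits inside $\omega(|\xi|)$. A subsidiary technical point is verifying the normalization $c_k \sim Q_k^{\tau - 2}$ without any logarithmic loss, which requires both \eqref{eq:sigmadef} to pin down the relative sizes of $\psi(q)$ across $P_k$ and \eqref{eq:chigrowth} to prevent $\chi$ from distorting the weighted prime sum.
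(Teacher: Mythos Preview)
Your overall framework coincides with the paper's: single-scale functions supported near rationals with prime denominators in blocks $P_k$, weighted to exploit $\chi$, multiplied across scales, with $\omega$ absorbing convolution overhead. Two points, however, need correction.

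The weighting $1/\chi(q)$ loses a logarithm and destroys the improvement. The Fourier coefficient of your $f_k$ at a prime $q\mid\xi$ is $c_k\,q\psi(q)/\chi(q)$, and since $c_k^{-1}=\sum_{q\in P_k} q\psi(q)/\chi(q)$, prime-number-theorem density forces $c_k\sim \chi(Q_k)\,(\log Q_k)\, Q_k^{\tau-2}$ rather than $Q_k^{\tau-2}$. Substituting back gives $|\hat f_k(\xi)|\lesssim (\log Q_k)/Q_k$ at the scale $|\xi|\sim 1/\psi(Q_k)$: the $\chi$ cancels out and you recover only Bluhm's bound $|\xi|^{-1/\tau}\log|\xi|$. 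The paper instead weights each bump by $1/(q^2\chi(q)\psi(q))$, so that the Fourier coefficient at $q\mid s$ is \emph{exactly} $C_k^{-1}/(q\chi(q))$ with normalization $C_k=\sum_{q\in P_k}1/(q\chi(q))\in[1,2]$ by the very hypothesis \eqref{eq:chicondition}. This choice of weight is not a subsidiary technicality; it is what converts the divergence condition \eqref{eq:chicondition} directly into the target bound $\theta(|s|)$.

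Your account of how $\omega$ enters the convolution is also off. It is not that $O(\omega(|\xi|))$ scales contribute; at a given $\xi$ essentially only the single scale $k$ with $M_k\lesssim|\xi|<M_{k+1}$ matters. The factor $\omega$ instead absorbs the effective Fourier \emph{width} of the accumulated product $\hat\mu_{k-1}$, which is $N_1\sim \psi(\beta(M_{k-1}))^{-2}$: in $\hat\mu_{k-1}*\hat g_k(s)$ the dominant contribution is $\sum_{|t|<2N_1}|\hat\mu_{k-1}(t)||\hat g_k(s-t)|\lesssim N_1\,\theta(|s|)$, and one chooses $M_k$ so large that $N_1\le\omega(M_k/4)\le\omega(|s|)$. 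The paper formalizes this as a Convolution Stability Lemma that propagates the bound $\theta(|s|)\omega(|s|)$ inductively with uniform implicit constant, using an Ingham-type bump with $\hat\phi(s)\lesssim\exp\bigl(-|s|^{(\sigma+1)/(2\sigma)}\bigr)$ to make the tail sums summable without further losses.
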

In order to simplify our notation, we define
\begin{equation}\label{eq:thetadef}
\theta(\xi) := \frac{1}{\psi^{-1}(1/\xi) \chi(\psi^{-1}(1/\xi))}.
\end{equation}
\begin{remark}\label{rmk:slowdecayex}
If $\psi(q) = q^{-\tau}$, Theorem \ref{thm:slowdecay} gives estimates that improve on those of Kaufman \cite{Kaufman81}. In this case, the estimate \eqref{eq:slowdecay} becomes
\[|\hat \mu_{\chi, \omega}(\xi)| \lesssim |\xi|^{-1/\tau} \frac{\omega(|\xi|)}{\chi(|\xi|^{1/\tau})}.\]
Observe that, for example, the choice $\chi(q) = \log \log q$ satisfies \eqref{eq:chicondition}. On the other hand, $\omega$ can be taken to be any function that increases to $\infty$, so it is possible to choose $\omega(\xi) = \log \log \log \xi$, for example. Hence there exists a measure $\mu$ supported on the well-approximable numbers satisfying
\[|\hat \mu(\xi)| \lesssim |\xi|^{-1/\tau} \frac{\log \log \log |\xi|}{\log \log |\xi|} \ll |\xi|^{-1/\tau}.\]

\end{remark}

Our next result is in the direction of Problem \ref{prob:alphasetFourier}.
\begin{theorem} \label{thm:compact}
Let $\alpha$ be a dimension function with
\begin{equation}\label{eq:alphalimit}
0 < \lim_{x \to 0}\frac{\log \alpha (x)}{\log x} = \nu < \infty
\end{equation}
and for some $\rho < 1$ such that 
\begin{equation}\label{eq:alphagrowth}
\frac{\alpha(x_1)}{\alpha(x_2)} \geq \left(\frac{x_1}{x_2} \right)^{\rho}
\end{equation}
for sufficiently small $x_1 < x_2$.

Let $\omega$ be an increasing function such that $\lim_{\xi \to \infty} \omega(\xi) = \infty$. Then there exists a compact set $F_\alpha$ of zero $\alpha$-Hausdorff measure such that there exists a measure $\mu_{\alpha, \omega}$ supported on $F_\alpha$ satisfying
\[\left|\hat \mu(\xi)\right|\lesssim \sqrt{\alpha (1/|\xi|)}\omega (|\xi|)\]
for all $s \neq 0$. Such a set is given by an appropriately chosen subset of the well-approximable numbers $E(\psi)$ where
\begin{equation} \label{eq:alpha def}
\psi(q) = \alpha^{-1}\left(q^{-2}\right).    
\end{equation}
\end{theorem}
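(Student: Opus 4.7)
The plan is to reduce Theorem \ref{thm:compact} to Theorem \ref{thm:slowdecay} by a careful choice of parameters, and then separately verify that the resulting compact support has zero $\alpha$-Hausdorff measure.

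First, I would verify that $\psi(q) = \alpha^{-1}(q^{-2})$ satisfies the hypotheses of Theorem \ref{thm:slowdecay}. Writing $x = \psi(q)$, so that $\alpha(x) = q^{-2}$, the limit \eqref{eq:alphalimit} gives $\log\alpha(x) = \nu \log x + o(\log x)$, hence
\[
\tau = \lim_{q \to \infty} \frac{-\log \psi(q)}{\log q} = \frac{2}{\nu}.
\]
The hypothesis \eqref{eq:alphagrowth} forces $\nu \leq \rho < 1$, so $\tau > 2$, verifying \eqref{eq:taudef}. The same inequality \eqref{eq:alphagrowth}, after substituting $x_i = \psi(q_i)$, translates directly into \eqref{eq:sigmadef} with $\sigma = 2/\rho > 2$. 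A direct computation also yields $\psi^{-1}(1/|\xi|) = 1/\sqrt{\alpha(1/|\xi|)}$.

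Second, I would apply Theorem \ref{thm:slowdecay} with $\chi \equiv 1$ (with the given $\omega$ replaced by a slightly slower growing function, say $\sqrt{\omega}$, to absorb implicit constants). The condition \eqref{eq:chicondition} reduces to the divergence of $\sum 1/q$ over primes, which holds by Mertens' theorem, and \eqref{eq:chigrowth} is trivial. The resulting probability measure $\mu$ on a compact subset $F_\alpha \subset E(\psi)$ satisfies
\[
|\hat\mu(\xi)| \lesssim \frac{\omega(|\xi|)}{\psi^{-1}(1/|\xi|)} = \omega(|\xi|) \sqrt{\alpha(1/|\xi|)},
\]
which is exactly the desired Fourier estimate, matching $\theta(\xi) = \sqrt{\alpha(1/|\xi|)}$.

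Third, I would show $\mathcal{H}_\alpha(F_\alpha) = 0$. The construction producing Theorem \ref{thm:slowdecay} should yield $F_\alpha$ as a nested intersection $\bigcap_n F_n$, where $F_n$ is covered by balls of the form $B(p/q, \psi(q))$ with $q$ ranging over primes in a dyadic window $[Q_n, 2Q_n]$ and $p$ ranging over integers coprime to $q$ in a fixed bounded interval. By the prime number theorem the number of such balls is $\lesssim Q_n^2/\log Q_n$, and by the very choice $\alpha(\psi(Q_n)) = Q_n^{-2}$ each ball has $\alpha$-premeasure $\lesssim Q_n^{-2}$. Hence the $\alpha$-content of the cover is $\lesssim 1/\log Q_n$, which tends to $0$ as $Q_n \to \infty$. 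Since the covering radii $\psi(Q_n) \to 0$ as well, this yields $\mathcal{H}_\alpha(F_\alpha) = 0$.

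The main obstacle is the third step: it is crucial that $F_\alpha$ be shown to lie inside a cover of small $\alpha$-content at every dyadic scale, and this relies on details of the Cantor-type construction used to prove Theorem \ref{thm:slowdecay}. However, the exact cancellation $\#\{\text{balls}\} \cdot \alpha(\psi(Q_n)) \sim 1/\log Q_n$ is built into the definition $\psi(q) = \alpha^{-1}(q^{-2})$: this is precisely the borderline case of the generalized Jarn\'ik--Besicovitch theorem, where restricting the denominators to primes supplies the extra $1/\log Q_n$ factor that drives the $\alpha$-content to zero.
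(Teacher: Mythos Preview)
Your first two steps match the paper: Remark~\ref{rmk:compacthmconditions} records precisely the translation $\tau=2/\nu$, $\sigma=2/\rho$, and the construction of Theorem~\ref{thm:slowdecay} is run with $\chi\equiv1$, so that $\theta(\xi)=1/\psi^{-1}(1/|\xi|)=\sqrt{\alpha(1/|\xi|)}$ as you compute.

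The third step has a real gap. The construction behind Theorem~\ref{thm:slowdecay} does \emph{not} use a dyadic window of primes. With $\chi\equiv1$ the normalization $1\le\sum_{M_k\le q\le\beta(M_k),\,q\text{ prime}}1/q\le 2$ (which is what makes the single-scale bound $|\hat g_k(s)|\lesssim\theta(|s|)$ of Lemma~\ref{lemma: gkestimate2.1} hold with a uniform constant) forces, by Mertens, $\beta(M_k)\sim M_k^{\gamma}$ for some fixed $\gamma>1$. With this actual window your one-level cover gives
\[
\sum_{\substack{M_k\le q\le M_k^{\gamma}\\q\text{ prime}}}\ \sum_{p=0}^{q-1}\alpha\bigl(2\psi(q)\bigr)
\ \sim\ \sum_{q}q\cdot q^{-2}\ \sim\ \log\gamma,
\]
a fixed positive constant; the $1/\log Q_n$ saving you invoke is an artifact of the dyadic-window hypothesis and is not available here. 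The paper's fix (Section~\ref{sec:hausdorffmeasure}) is a \emph{two}-level cover: intersect each interval $J\in\mathcal I_{k-1}$ from the previous stage with the stage-$k$ intervals that meet it. The stage-$k$ contribution inside a fixed $J$ is then $\sim |J|\cdot\log\gamma$, and summing over $J$ gives total $\alpha$-content $\lesssim\sum_{J\in\mathcal I_{k-1}}|J|\lesssim M_{k-1}^{2-\tau}\to 0$. The decay to zero comes from the small Lebesgue measure of the previous generation, not from a logarithmic gain at the current one; the prime restriction merely keeps the stage-$k$ sum bounded rather than growing like $\log M_k$.
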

\begin{remark}
Although this does not provide an answer to Problem \ref{prob:alphasetFourier}, it comes within an arbitrarily slowly growing function of answering this problem. In other words, any improvement on the estimate of Theorem \ref{thm:compact} will give an answer to Problem \ref{prob:alphasetFourier}. 
\end{remark}
\begin{remark}\label{rmk:compacthmconditions}
Observe that the condition \eqref{eq:alphalimit} on $\alpha$ implies the condition \eqref{eq:taudef} on $\psi$ for $\tau = 2/\nu$. A simple calculation also shows that the condition \eqref{eq:alphagrowth} implies the condition \eqref{eq:sigmadef} with $\sigma = 2/\rho$. This is the only way in which the assumptions \eqref{eq:alphalimit} and \eqref{eq:alphagrowth} will be used.
\end{remark}
Finally, we show that, for any decreasing approximation function $\psi$, the set $E(\psi)$ supports a Rajchman measure. This improves a result of Bluhm \cite{Bluhm00} constructing a Rajchman measure supported on the set of Liouville numbers.
\begin{theorem} \label{thm:fastdecay}
For an arbitrary nonnegative, decreasing function $\psi$ there exists a Rajchman measure, $\mu$, supported on a compact subset of the $\psi$-well-approximable numbers.
\end{theorem}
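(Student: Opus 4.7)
The plan is to split into two cases based on the decay rate of $\psi$. If there exists $\tau$ such that $\psi(q) \geq q^{-\tau}$ for all sufficiently large $q$, then the monotonicity $E(\psi_0) \subseteq E(\psi)$ whenever $\psi_0 \leq \psi$ gives $E(q^{-\tau}) \subseteq E(\psi)$, and the problem reduces to a known setting: for $\tau > 2$ apply Theorem~\ref{thm:slowdecay} with $\psi_0(q) = q^{-\tau}$ to obtain a measure with polynomial Fourier decay (hence Rajchman), while for $\tau \leq 2$ the set $E(\psi)$ contains all irrationals and restricted Lebesgue measure works. The interesting remaining case is when $\psi$ decays faster than every polynomial, i.e.\ $\limsup_{q\to\infty} -\log \psi(q)/\log q = \infty$; this is the regime of Bluhm's theorem on the Liouville numbers \cite{Bluhm00}, which I would adapt directly.

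In the super-polynomial case, choose an extremely rapidly increasing sequence of primes $q_1 < q_2 < \cdots$ satisfying $q_{n+1} \geq \max(q_n^n, \psi(q_n)^{-2})$, which is possible exactly because $\psi$ is super-polynomial. Let $\varphi_n$ be the density of the uniform probability measure on
\[U_n = \bigcup_{p=1}^{q_n-1}\left[\frac{p}{q_n} - \frac{\psi(q_n)}{2},\, \frac{p}{q_n} + \frac{\psi(q_n)}{2}\right],\]
and define $\mu$ as the weak-$\ast$ limit of the probability measures whose density is proportional to $\prod_{n=1}^N \varphi_n$. By construction, every $x \in \supp \mu$ lies within $\psi(q_n)$ of some fraction $p/q_n$ for each $n$, so $\supp \mu \subseteq E(\psi)$. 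The Rajchman property will follow from the explicit formula
\[\hat\varphi_n(\xi) = \frac{\sin(\pi\psi(q_n)\xi)}{\pi\psi(q_n)\xi} \cdot \frac{1}{q_n-1}\sum_{p=1}^{q_n-1} e^{-2\pi i p\xi/q_n},\]
in which the exponential sum is (up to the missing $p=0$ term) a Ramanujan sum, small when $\xi$ is not near a multiple of $q_n$, while the sinc factor gives gain of order $(\psi(q_n)|\xi|)^{-1}$ at large frequencies. Writing $\hat\mu$ in terms of the $\hat\varphi_n$ and decomposing $\xi$ by the critical scale $n(\xi)$ with $q_{n(\xi)} \sim |\xi|$, the sparsity condition $q_{n+1} \geq \psi(q_n)^{-2}$ forces each fixed large $\xi$ to be ``in scale'' for at most one index $n$; for all other $n$, either the sinc decay or the Ramanujan cancellation supplies smallness, and together these force $|\hat\mu(\xi)| \to 0$.

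The main technical obstacle is to justify that $\mu$ is a genuine nontrivial probability measure, that is, that the normalizing constants $Z_N = \int \prod_{n=1}^N \varphi_n$ stay bounded away from zero as $N \to \infty$. This amounts to a quasi-independence estimate: the super-polynomial sparsity of the $q_n$ makes $\varphi_m$ look essentially uncorrelated with $\varphi_n$ for $m \gg n$, so $\int \varphi_n \varphi_m \to 1$ and the partial products converge in $L^2$ to a nontrivial limit. Once this convergence is established, the Fourier analysis above yields the Rajchman conclusion and completes the proof.
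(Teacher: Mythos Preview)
Your case split and the reduction to Theorem~\ref{thm:slowdecay} in the polynomial-decay regime are fine. The construction you propose in the super-polynomial case, however, has a fatal gap: with a \emph{single} denominator $q_n$ per level, the resulting measure $\mu$ is never Rajchman.

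This can be seen directly in physical space, without any Fourier bookkeeping. Every $x\in\supp\mu$ lies within $\psi(q_n)/2$ of some $p/q_n$, so on $\supp\mu$ one has $|e^{-2\pi i q_n x}-1|\leq \pi q_n\psi(q_n)$. Integrating against $\mu$ gives
\[
|\hat\mu(q_n)-1|\leq \pi\, q_n\psi(q_n),
\]
and since in this regime $q_n\psi(q_n)\to 0$, you get $\hat\mu(q_n)\to 1$. Thus $\hat\mu$ does not tend to $0$ along the sequence $(q_n)$. Your appeal to ``Ramanujan cancellation at the other levels'' does not rescue this, because you are forming a \emph{product} in physical space and hence a \emph{convolution} in frequency: at $\xi=q_n$, the term $t=q_n$ in $\hat\mu_{n-1}\!*\hat\varphi_n$ contributes $\hat\mu_{n-1}(0)\,\hat\varphi_n(q_n)\approx 1$, independently of how small $\hat\varphi_m(q_n)$ may be for $m\neq n$.

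The paper's construction is designed precisely to avoid this obstruction, and it does so without any case split on $\psi$. Each building block $g_k$ uses \emph{many} widely separated prime denominators $q_{k,1}\ll q_{k,2}\ll\cdots\ll q_{k,n_k}$ with $n_k\to\infty$ (Subsection~\ref{subsec:singlescalefastdecay}). For any nonzero integer $s$, at most one index $j$ can have $q_{k,j}\mid s$ with $\psi(q_{k,j})|s|\lesssim 1$, and the average over the $n_k$ denominators then yields the uniform bound $|\hat g_k(s)|\lesssim 1/n_k$ of Lemma~\ref{lem:gkestimatefastdecay}. It is this averaging, fed through the convolution-stability Lemma~\ref{lem:convolutionstability2}, that forces $|\hat\mu(\xi)|\to 0$.
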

In a recent work, Polasek and Rela \cite{PolasekRela24} improve Bluhm's result in a different way by showing an explicit Fourier decay bound on the set of Liouville numbers. They show that if $f : \mathbb{R}^+ \to \mathbb{R}^+$ is any function such that 
\[\limsup_{\xi \to \infty} \frac{\xi^{-\alpha}}{f(\xi)} = 0 \quad \text{for all $\alpha > 0$},\]
then there exists a measure $\mu_f$ supported on the set of Liouville numbers such that $ |\hat \mu_f(\xi)| \lesssim f(|\xi|)$ for all $\xi$; on the other hand, if $g : \mathbb{R}^+ \to \mathbb{R}^+$ is any function such that 
\[\liminf_{\xi \to \infty} \frac{\xi^{-\alpha}}{f(\xi)} > 0 \quad \text{for some $\alpha > 0$},\]
then there does not exist a measure $\mu_g$ supported on the set of Liouville numbers such that $|\hat \mu_g(\xi)| \lesssim g(|\xi|)$ for all $\xi \in \mathbb{R}$.
\section{Convolution stability lemmas} 
The proofs of the main results of this paper rely on the construction of a sequence of functions which will approximate the measures that satisfy the statements of the theorem. The functions of the sequence are themselves a product of functions. In the frequency space, these products become convolutions and a major component of the proof is show that the Fourier decay estimates of these functions remain stable as the number of convolutions tends to infinity. The following two lemmas will be referred to when making an argument for stability by induction. This first lemma will be applied to Theorem \ref{thm:slowdecay} and Theorem \ref{thm:compact}.
\begin{lemma}{\textnormal{(Convolution Stability Lemma)}}\label{lem:convolutionstability}
Let $\omega : \mathbb{N} \to \mathbb{R}^+$ be a function that increases to infinity such that $\omega(t) \leq \log t$ for $t \geq 2$. Suppose that $N_2$ is sufficiently large depending on $\omega$ and $N_1$. Moreover, let $G,H : \mathbb{Z} \to \mathbb{C}$ be functions satisfying the following bounds for some $N_3 > \frac{1}{\psi (\beta(N_2))^2}$:

\begin{IEEEeqnarray} {rCll}
|G(s)| & \leq & 1 & \quad \text{for all $s \in \mathbb{Z}$} \label{eq:G global}\\
G(0) & = &1& \label{eq:G(0)}\\
G(s) & = & 0 & \quad 0 < |s| \leq N_2 \label{eq:GSmall}\\
|G(s)| & \lesssim & \theta(|s|) & \quad \text{everywhere} \label{eq:GEverywhere}\\ 
|G(s)| & \lesssim & \exp\left(-\frac{1}{2}\left|\frac{s}{2N_3}\right|^{\frac{\sigma+1}{4 \sigma}}\right) &\quad \text{when } |s|\geq2N_3 \label{eq:GVeryLarge}\\
|H| & \leq & 2 & \label{eq:H global}\\ 
|H(s)| &\lesssim &  \theta(|s|) \omega(|s|) & \quad \text{everywhere} \label{eq:HEverywhere}\\ 
|H(s)| & \lesssim & \exp\left(-\frac{1}{2}\left|\frac{s}{8N_1}\right|^{\frac{\sigma+1}{4 \sigma}}\right) & \quad \text{when } |s|\geq8N_1. \label{eq:HVeryLarge}
\end{IEEEeqnarray}

Then

\begin{IEEEeqnarray} {rCll}
|H*G(s) - H(s)| & \lesssim & N_2^{-99} & \quad \text{when } 0\leq|s| < N_2/4 \label{eq:convolve1}\\ 
|H*G(s)| & \lesssim & \theta(|s|) \omega(|s|) & \quad \text{everywhere} \label{eq:convolve2}\\ 
|H*G(s)| & \lesssim & \exp\left(-\frac{1}{2}\left|\frac{s}{8N_3}\right|^{\frac{\sigma+1}{4 \sigma}}\right) & \quad \text{when } |s| \geq 8N_3. \label{eq:convolve3}
\end{IEEEeqnarray}

\end{lemma}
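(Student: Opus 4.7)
The overall strategy is a convolution-stability argument: because $G(0)=1$ and $G$ vanishes on $0 < |t| \le N_2$ by \eqref{eq:GSmall}, the convolution $H*G$ nearly coincides with $H$ on frequencies $|s| < N_2/4$, while at larger scales the combined polynomial and stretched-exponential decay hypotheses on $G$ and $H$ force the tail $\sum_{|t|>N_2} H(s-t)G(t)$ to be small. All three conclusions start from the identity
\[H*G(s) \;=\; H(s) \;+\; \sum_{|t|>N_2} H(s-t)\,G(t),\]
and differ only in how this tail is decomposed and estimated according to the sizes of $|s-t|$ and $|t|$ relative to $8N_1$, $2N_3$, and $|s|$.

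For \eqref{eq:convolve1}, the plan is to bound the tail directly. Under the hypothesis $|s| < N_2/4$ and $|t| > N_2$, one has $|s-t| > 3N_2/4$, which exceeds $8N_1$ once $N_2$ is sufficiently large relative to $N_1$. The bound \eqref{eq:HVeryLarge} then applies, and combined with the trivial bound $|G(t)| \le 1$, the tail is dominated by a stretched-exponential integral of total mass $\lesssim \exp\bigl(-c(N_2/N_1)^{(\sigma+1)/(4\sigma)}\bigr)$, which beats $N_2^{-99}$ for $N_2$ sufficiently large.

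For \eqref{eq:convolve2} and \eqref{eq:convolve3}, the tail is split into three regions: (A) $N_2 < |t| \le 2N_3$ with $|s-t| \ge 8N_1$; (B) $|t| > 2N_3$; and (C) $N_2 < |t| \le 2N_3$ with $|s-t| < 8N_1$. In (A), the hypothesis \eqref{eq:HVeryLarge} provides exponential decay in $|s-t|$ and \eqref{eq:GEverywhere} provides polynomial decay in $|t|$; in (B), the hypothesis \eqref{eq:GVeryLarge} provides exponential decay in $|t|$. Combined with the triangle inequality $\max(|t|,|s-t|) \ge |s|/2$, the exponential factor in each region can be rewritten in terms of $|s|$, and straightforward summation yields a bound dominated by $\exp\bigl(-\tfrac12 |s/(8N_3)|^{(\sigma+1)/(4\sigma)}\bigr)$ when $|s| \ge 8N_3$, giving \eqref{eq:convolve3}, and by a quantity much smaller than $\theta(|s|)\omega(|s|)$ for smaller $|s|$.

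The main obstacle is region (C), where neither exponential bound is available. When $|s| \ge 16 N_1$, the set of integers $t$ with $|s-t| < 8N_1$ contains $O(N_1)$ points, all with $|t| \sim |s|$, so $|H(s-t)||G(t)| \lesssim \theta(|s|)$ for each such $t$, and the region contributes $O(N_1\,\theta(|s|))$ to the tail sum. To absorb the prefactor $N_1$ into the target $\omega(|s|)\theta(|s|)$, one uses the hypothesis that $N_2$ is sufficiently large depending on $\omega$ and $N_1$: since $\omega$ increases to infinity, one may arrange $\omega(N_2/4) \ge N_1$, so that $\omega(|s|) \ge N_1$ throughout the range $|s| \ge N_2/4$. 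The complementary range $|s| < N_2/4$ is handled by combining \eqref{eq:convolve1} with \eqref{eq:HEverywhere}, since $\theta(|s|)\omega(|s|)$ is bounded below by a positive constant on this range while the error $N_2^{-99}$ from \eqref{eq:convolve1} is negligible.
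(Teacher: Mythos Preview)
Your approach is essentially the paper's: the identification of the ``main obstacle'' region $|s-t|<8N_1$ containing $O(N_1)$ points, the use of \eqref{eq:GEverywhere} to extract $\theta(|s|)$ per point, and the absorption of the factor $N_1$ into $\omega(|s|)$ by taking $N_2$ large enough that $\omega(N_2/4)\ge N_1$, all match the paper's sum $\mathrm{I}$ in its proof of \eqref{eq:convolve2}. The treatment of \eqref{eq:convolve1} and of the small-$|s|$ range for \eqref{eq:convolve2} is likewise the same.

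One point deserves tightening. In your region (B) you invoke only \eqref{eq:GVeryLarge}, giving exponential decay in $|t|$, and then appeal to $\max(|t|,|s-t|)\ge |s|/2$ to ``rewrite the exponential in terms of $|s|$.'' But when $2N_3 < |t| < |s|/2$ (which is nonempty once $|s|>4N_3$), the $G$-exponential depends on $|t|$, not on the large quantity $|s-t|$, so it does not directly yield decay in $|s|$; indeed $\sum_{|t|>2N_3}\exp\bigl(-\tfrac12(|t|/2N_3)^{(\sigma+1)/(4\sigma)}\bigr)\sim N_3$, which is far too large. The fix is immediate: in that subcase $|s-t|\ge |s|/2 \gg 8N_1$, so \eqref{eq:HVeryLarge} applies and furnishes the required decay in $|s|$. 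This further split according to $|t|\gtrless |s|/2$ is precisely how the paper organizes its sums $\mathrm{I}$ and $\mathrm{II}$ for \eqref{eq:convolve3}.
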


A different version of this lemma will be applied to prove Theorem \ref{thm:fastdecay}.
\begin{lemma}[Convolution Stability Lemma 2] \label{lem:convolutionstability2}
Let $\delta < \frac{1}{N_1^2}$. Suppose that $N_2$ is sufficiently large depending on $\omega$ and $N_1$. Moreover, let $G,H : \mathbb{Z} \to \mathbb{C}$ be functions satisfying the following bounds for some $N_3 > \frac{1}{\psi (\beta(N_2))}$:

\begin{IEEEeqnarray} {rCll}
|G(s)| & \leq & 1 \quad & \text{for all $s \in \mathbb{Z}$} \label{eq:2 G global}\\
G(0) & = &1& \label{eq:2 G(0)}\\
G(s) & = & 0 & \quad 0 < |s| \leq N_2 \label{eq:GSmall2}\\
|G(s)| & \lesssim & \delta & \quad s \neq 0 \label{eq:2 GEverywhere}\\ 
|G(s)| & \lesssim & \exp\left(-\frac{1}{2}\left|\frac{s}{2N_3}\right|^{\frac{3}{4}}\right) &\quad \text{when } |s|\geq2N_3 \label{eq:2 GVeryLarge}\\
|H| & \leq & 2 & \label{eq:2 H global}\\ 
|H(s)| & \lesssim & \exp\left(-\frac{1}{2}\left|\frac{s}{8N_1}\right|^{\frac{3}{4}}\right) & \quad \text{when } |s|\geq8N_1. \label{eq:2 HVeryLarge}
\end{IEEEeqnarray}

Then

\begin{IEEEeqnarray} {rCll}
|H*G(s) - H(s)| & \lesssim & N_2^{-99} & \quad \text{when } 0 \leq|s| < N_2/4 \label{eq:2 convolve1}\\ 
|H*G(s)| & \lesssim & \delta^{1/2} & \quad \text{when $|s| \geq N_2/4$} \label{eq:2 convolve2}\\ 
|H*G(s)| & \lesssim & \exp\left(-\frac{1}{2}\left|\frac{s}{8N_3}\right|^{\frac{3}{4}}\right) & \quad \text{when } |s| \geq 8N_3. \label{eq:2 convolve3}
\end{IEEEeqnarray}
\end{lemma}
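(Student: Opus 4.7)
The plan is to exploit the vanishing of $G$ on $0 < |s| \leq N_2$ together with $G(0) = 1$ to write
\[
(H*G)(s) = H(s) + \sum_{|s-t| > N_2} H(t)\, G(s-t),
\]
and then bound the error sum in three regimes. For \eqref{eq:2 convolve1}, when $|s| < N_2/4$ and $|s-t| > N_2$ the triangle inequality forces $|t| > 3N_2/4$, which for $N_2$ large lies well beyond the threshold $8N_1$. The stretched-exponential hypothesis \eqref{eq:2 HVeryLarge} then dominates $|H(t)|$ and the trivial bound $|G| \leq 1$ suffices, giving an error of size $\lesssim N_1 \exp(-|3N_2/(32N_1)|^{3/4}/2)$, which beats $N_2^{-99}$ for $N_2$ sufficiently large depending on $N_1$.

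For \eqref{eq:2 convolve2}, the main term $H(s)$ with $|s| \geq N_2/4 \geq 8N_1$ is bounded by $\exp(-|N_2/(32N_1)|^{3/4}/2)$, which for $N_2$ large relative to $N_1$ is smaller than $\delta^{1/2}$. The error sum is handled using the uniform bound $|G| \leq \delta$ from \eqref{eq:2 GEverywhere} together with the elementary estimate
\[
\|H\|_{\ell^1(\mathbb{Z})} \leq 2(16N_1+1) + \sum_{|t|>8N_1} \exp(-|t/(8N_1)|^{3/4}/2) \lesssim N_1,
\]
yielding an error of size $\lesssim \delta N_1$. Since $\delta < N_1^{-2}$ we have $N_1 < \delta^{-1/2}$, so $\delta N_1 < \delta^{1/2}$. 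I expect this step to be the main obstacle: the square root in $\delta^{1/2}$ — as opposed to $\delta$ — emerges precisely from this sharp balance between the $\delta$-bound on $G$ and the $N_1$-sized $\ell^1$-mass of $H$, and is the only place where the specific exponent in the hypothesis $\delta < 1/N_1^2$ is used.

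For \eqref{eq:2 convolve3} I would split the sum into the ranges $|t| \leq |s|/2$ and $|t| > |s|/2$. On the first piece $|s-t| \geq |s|/2 \geq 4N_3$, so \eqref{eq:2 GVeryLarge} gives $|G(s-t)| \lesssim \exp(-|(s-t)/(2N_3)|^{3/4}/2)$ while $H$ is controlled by its $\ell^1$-mass $\lesssim N_1$. On the second piece $|t| \geq 4N_3 \geq 8N_1$, assuming $N_3$ exceeds a modest multiple of $N_1$ (as is implicit from the hypothesis $N_3 > 1/\psi(\beta(N_2))$ in the intended application), so the stretched-exponential decay of $H$ from \eqref{eq:2 HVeryLarge} applies and the bound $|G| \leq 1$ suffices. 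Subadditivity of $x \mapsto x^{3/4}$ applied to the product bound consolidates the two stretched exponentials into a single one at the coarser scale $\max(8N_1, 2N_3) \leq 8N_3$, and the polynomial prefactors from summation in $t$ are absorbed into the exponential once $|s| \geq 8N_3$ by relaxing the decay constant from $1/2$ to a slightly smaller absolute constant — the implicit constant in $\lesssim$ then absorbs the mismatch.
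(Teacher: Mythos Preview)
Your argument follows the same three-regime strategy as the paper's proof: isolate the $t=s$ term using $G(0)=1$ and the gap \eqref{eq:GSmall2}, then control the tail via the stretched-exponential decay of $H$ for \eqref{eq:2 convolve1}; balance the $\delta$-bound on $G$ against the $O(N_1)$ mass of $H$ for \eqref{eq:2 convolve2}; and split at $|t|=|s|/2$ for \eqref{eq:2 convolve3}. The only cosmetic difference is that on the piece where the argument of $G$ is large you invoke $\|H\|_{\ell^1}\lesssim N_1$ (yielding a prefactor $N_1$), whereas the paper uses $|H|\le 2$ and sums the $G$-tail directly (yielding a prefactor comparable to $N_3$); your version is, if anything, slightly sharper, and both arguments then absorb the prefactor by passing from scale $4N_3$ to $8N_3$ in the same hand-wavy way.
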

Before proving these lemmas, we need a preliminary estimate on $\theta$. We will show that the function $\theta(\xi)$ decays like $\xi^{-1/\tau}$ up to an $\epsilon$-loss in the exponent.
\begin{lemma}
Let $\psi, \chi$ be as in Theorem \ref{thm:slowdecay}, and let $\theta(\xi)$ be as in \eqref{eq:thetadef}. Then $\theta(|\xi|) \approx |\xi|^{-\frac{1}{\tau}}$ for large $|\xi|$. 
\end{lemma}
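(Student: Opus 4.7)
The plan is to unpack the definition of $\theta$ and show separately that $\psi^{-1}(1/\xi) \approx \xi^{1/\tau}$ and that $\chi(\psi^{-1}(1/\xi)) \approx 1$, both in the $\approx$ sense, so that dividing one by their product gives $\theta(\xi) \approx \xi^{-1/\tau}$.

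First I would extract from the limit hypothesis \eqref{eq:taudef} the two-sided quantitative bound: for every $\epsilon > 0$ there is a threshold $Q_\epsilon$ such that
\[
q^{-\tau - \epsilon} \;\leq\; \psi(q) \;\leq\; q^{-\tau + \epsilon} \qquad \text{for all } q \geq Q_\epsilon.
\]
Since $\psi$ is decreasing, inverting yields
\[
\xi^{\,1/(\tau + \epsilon)} \;\leq\; \psi^{-1}(1/\xi) \;\leq\; \xi^{\,1/(\tau - \epsilon)}
\]
for all sufficiently large $\xi$ (depending on $\epsilon$). Writing $1/(\tau \pm \epsilon) = 1/\tau + O(\epsilon)$ and absorbing this into the $\lessapprox$ notation as defined in the introduction gives $\psi^{-1}(1/\xi) \approx \xi^{1/\tau}$.

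Next, I would estimate $\chi$ at the point $\psi^{-1}(1/\xi)$. The hypothesis $1 \leq \chi(q) \leq \log q$ gives immediately the lower bound $\chi(\psi^{-1}(1/\xi)) \geq 1$. For the upper bound, the estimate on $\psi^{-1}$ above yields
\[
\chi\!\left(\psi^{-1}(1/\xi)\right) \;\leq\; \log\!\left(\psi^{-1}(1/\xi)\right) \;\lesssim\; \log \xi,
\]
and $\log \xi \lessapprox 1$, so $\chi(\psi^{-1}(1/\xi)) \approx 1$. (The hypothesis \eqref{eq:chigrowth} is not needed for this lemma; it will be used later in the induction, but here the trivial envelope $\chi \leq \log$ is enough.)

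Combining these, the denominator in the definition of $\theta$ satisfies $\psi^{-1}(1/\xi)\chi(\psi^{-1}(1/\xi)) \approx \xi^{1/\tau}$, and hence $\theta(\xi) \approx \xi^{-1/\tau}$ for large $\xi$, as desired. The only mild subtlety — really the only place any care is required — is bookkeeping the two-sided $\epsilon$-losses when inverting $\psi$ and verifying that they fit under the $\approx$ umbrella; this is straightforward given that $\tau > 2$ is a fixed constant so that $1/(\tau \pm \epsilon) - 1/\tau$ is $O(\epsilon)$ uniformly.
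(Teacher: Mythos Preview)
Your proof is correct and follows essentially the same approach as the paper: the paper's proof is a terse three-line sketch that asserts $\psi(q) \approx q^{-\tau}$ implies $\psi^{-1}(t) \approx t^{-1/\tau}$ and that $\chi(t) \approx 1$, and you have simply unpacked these assertions with the explicit $\epsilon$-bookkeeping. Your observation that the envelope $\chi \leq \log$ already suffices (without invoking \eqref{eq:chigrowth}) is a minor clarification not made explicit in the paper, but the underlying argument is the same.
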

\begin{proof}
Since $\psi(q) \approx q^{-\tau}$ by assumption, we have that $\psi^{-1}(t) \approx t^{-1/\tau}$. A similar argument shows that $\chi(t) \approx 1$. Hence $\chi(\psi^{-1}(1/|\xi|)) \approx 1$. Thus
\begin{equation}\label{eq:thetatau}
\frac{1}{\psi^{-1}(1/|\xi|) \chi(\psi^{-1}(1/|\xi|))} \approx |\xi|^{-1/\tau}.
\end{equation}
\end{proof}
\subsection{Proof of Lemma \ref{lem:convolutionstability}}
\begin{proof} First, we prove \eqref{eq:convolve1}. Assume that $0 \leq |s| \leq N_2/4$. Rewrite the expression as
\begin{IEEEeqnarray*} {rCl}
|H * G (s) - H (s)| & = & \left|\sum_{t\in \mathbb{Z}} H(s-t)G (t) - H (s)\right|\\
&=&\left|H(s)G(0) -H(s) +\sum_{t \neq 0} H(s-t)G (t)\right| \\
&\leq & \sum_{t \neq 0} |H(s-t)G (t)|.
\end{IEEEeqnarray*}

Observe that we need only consider summands such that $|t| \geq N_2$ because $G(t) = 0$ for $|t| < N_2$. The previous expression becomes
\[\sum_{|t| \geq N_2} |H (s-t) G (t)|.\]

Apply the bound $\eqref{eq:G global}$ to $|G(t)|$. Notice that $|s-t| \geq |t|/2 \gg N_1$ when $|s| < N_2/4$. We may apply $\eqref{eq:HVeryLarge}$ with $|t|/2$ in place of $s$ to get an upper bound given that the bounding function is decreasing. Hence
\begin{IEEEeqnarray*} {rCl}
\sum_{|t| \geq N_2} |H (s-t) G (t)| &\lesssim & \sum_{|t| \geq N_2}\exp\left(-\frac{1}{2}\left|\frac{t}{8N_1}\right|^{\frac{\sigma+1}{4 \sigma}}\right)\\
& \leq & N_2^{-99}.
\end{IEEEeqnarray*}

The last inequality holds provided that $N_2$ is sufficiently large depending on $N_1$. The next task is to prove the estimate \eqref{eq:convolve2}. For $0 < |s| < N_2/4$, the estimate follows from \eqref{eq:HEverywhere} \eqref{eq:convolve1}. Indeed, the difference
\[|H*G(s) - H(s)| \lesssim N_2^{-99} \lesssim |s|^{-99}.\]
By the estimate \eqref{eq:thetatau}, we have that
\[|s|^{-99} \lesssim \theta(|s|).\]

Now assume that $|s| \geq N_2/4$. We have the inequality
\[|H * G (s)| \leq \mathrm{I} + \mathrm{II},\]
where
\[\mathrm{I} = \sum_{|t| < 2N_1} |H (t)G (s-t)|,\]
and
\[\mathrm{II} = \sum_{|t| \geq 2N_1} |H (t)G (s-t)|.\]\

Beginning with the sum $\mathrm{I}$, we apply \eqref{eq:H global} and observe that $|s-t| \geq |s| -|t| \geq |s|/2$ when $|s| \geq N_2/4$. Then we may apply $\eqref{eq:GEverywhere}$ with $|s|/2$ in place of $|s|$ to get
\begin{IEEEeqnarray*} {rCl}
\mathrm{I} &\lesssim & \theta(|s|) \sum_{|t| < 2N_1} 1\\
& \lesssim & \theta(|s|) \omega(|s|),
\end{IEEEeqnarray*}
provided that $N_2$ is sufficiently large depending on $\omega$ and $N_1$ so that the final inequality holds. To bound the sum $\mathrm{II}$, write

\[\mathrm{II} = \mathrm{A} + \mathrm{B},\]
where 
\[A = \sum_{\substack{|t| \geq 2N_1\\ {|s-t| \leq |s|/2}}} |H (t)G (s-t)|,\]
and 
\[B = \sum_{\substack{|t| \geq 2N_1\\ {|s-t| > |s|/2}}} |H (t)G (s-t)|.\]

To estimate the sum $A$, we apply $\eqref{eq:G global}$ and $\eqref{eq:HVeryLarge}$. Observe that $|s-t| \leq |s|/2$ implies that $|t| \geq |s|/2$. Thus, $t \gg N_1$ when $|s| \geq N_2/4$. Therefore
\[A \lesssim \sum_{|t| \geq |s|/2} \exp\left(-\frac{1}{2}\left|\frac{t}{8N_1}\right|^{\frac{\sigma+1}{4 \sigma}}\right).\]
By the integral test, we get the following upper bound for $\mathrm{A}$:
\[A \lesssim \int_{|s|/2}^\infty \exp\left(-\frac{1}{2}\left|\frac{t}{8N_1}\right|^{\frac{\sigma+1}{4 \sigma}}\right) \text{ } dt.\]
Observe that the integrand is decaying nearly exponentially. From \eqref{eq:thetatau}, we may conclude
\[A \lesssim \theta(|s|).\]

For the sum $B$, we apply \eqref{eq:GEverywhere} to $G$. Additionally, we may apply \eqref{eq:HVeryLarge}. Since $|s|/2 \gg N_1$, we have after making the substitution $u = s - t$ that 
\begin{IEEEeqnarray*} {rCl}
B & \lesssim &  \theta(|s|)  \sum_{|u| \geq |s|/2} \exp\left(-\frac{1}{2}\left|\frac{u}{8N_1}\right|^{\frac{\sigma+1}{4 \sigma}}\right)\\
& \lesssim & \theta(|s|)
\end{IEEEeqnarray*}
where the last inequality is implied by
\[\sum_{|u| \geq |s|/2} \exp\left(-\frac{1}{2}\left|\frac{u}{8N_1}\right|^{\frac{\sigma+1}{4 \sigma}}\right) \leq 1.\]

Combining the bounds for $\mathrm{I}$ and $\mathrm{II}$ completes the proof for \eqref{eq:convolve2}. We turn now to proving \eqref{eq:convolve3}. Assume $|s| \geq 8N_3$. We decompose the convolution as
\[|H * G (s)| \leq \mathrm{I} + \mathrm{II}, \]
where 
\[\mathrm{I} = \sum_{|t| < |s|/2} |H (s-t)G (t)|\]
and
\[\mathrm{II} = \sum_{|t| \geq |s|/2} |H (s-t)G (t)|.\]

Starting with $\mathrm{I}$, we apply \eqref{eq:G global}. Then apply $\eqref{eq:HVeryLarge}$ with $s-t$ in place of $s$, and use the fact that $|s-t|\geq |s|/2$. Then
\[\mathrm{I} \lesssim \sum_{|t| < |s|/2} \exp\left(-\frac{1}{2}\left|\frac{s}{16N_1}\right|^{\frac{\sigma+1}{4 \sigma}}\right).\]
There are at most $|s|/2$ summands in the above sum. Therefore
\[\mathrm{I} \lesssim |s| \exp\left(-\frac{1}{2}\left|\frac{s}{16N_1}\right|^{\frac{\sigma+1}{4 \sigma}}\right).\]
We may bound the prior estimate by a single exponential function by choosing a smaller negative power and eliminating the linear term. Hence,
\[\mathrm{I} \lesssim \exp\left(-\frac{1}{2}\left|\frac{s}{8N_3}\right|^{\frac{\sigma+1}{4 \sigma}}\right).\]

For the sum $\mathrm{II}$, apply the bounds \eqref{eq:GVeryLarge} and \eqref{eq:H global} to get
\begin{IEEEeqnarray*} {rCl}
\mathrm{II} & \lesssim & \sum_{|t| > |s|/2} \exp\left(-\frac{1}{2}\left|\frac{t}{2N_3}\right|^{\frac{\sigma+1}{4 \sigma}}\right).
\end{IEEEeqnarray*}
To bound the above sum, we use the integral test. Thus
\[\mathrm{II} \lesssim \int_{t > |s|/2} \exp \left(-\frac{1}{2} \left| \frac{t}{2 N_3} \right|^{\frac{\sigma + 1}{4 \sigma}} \right)\, dt. \]
To estimate this integral, we begin with a substitution. Let 
\[u = \frac{1}{2}\left|\frac{t}{2N_3}\right|^{\frac{\sigma+1}{4 \sigma}}.\]
Then
\[du = \frac{\sigma +1}{16 \sigma N_3}\left|\frac{t}{2N_3}\right|^{\frac{-3\sigma+1}{4 \sigma}} dt.\]
The integral may be rewritten as
\[\frac{16 \sigma N_3}{\sigma +1}\int_{t = |s|/2}^\infty \exp\left(-u\right) \left(2u\right)^{\frac{3\sigma -1}{\sigma +1}}\text{ } du.\]
Integrating by parts yields
\[\left(\frac{16 \sigma N_3}{\sigma +1}\right) \left(\ -\exp(-u)\left(2u\right)^{\frac{3\sigma -1}{\sigma +1}}\big |_{t = |s|/2}^{\infty} + \frac{6\sigma - 2}{\sigma + 1} \int_{t = |s|/2}^\infty \exp\left(-u\right) \left(2u\right)^{\frac{3\sigma -1}{\sigma +1}-1}\text{ } du\right).\]

It is easy to see that expression above is dominated by the first term and the integral is an error term. Indeed, repeated integration by parts will yield relatively small terms (we are only interesting in finding an estimate up to a multiplicative constant) which contribute a negligible amount to the estimate. We consider only the first term in the estimate and evaluating it at the endpoints to get

\[\mathrm{II} \lesssim N_3 \exp\left(-\frac{1}{2}\left|\frac{s}{4N_3}\right|^{\frac{\sigma+1}{4 \sigma}}\right)\left|\frac{s}{4N_3}\right|^{\frac{3\sigma-1}{4 \sigma}}.\]

Observe that exponential term is dominant for large values of $s$. We may bound the above expression by a single exponential term by choosing a smaller negative power. Hence,

\begin{IEEEeqnarray*} {rCl}
\mathrm{II} & \lesssim & N_3 \exp\left(-\frac{1}{2}\left|\frac{s}{4N_3}\right|^{\frac{\sigma+1}{4 \sigma}}\right)\left|\frac{s}{4N_3}\right|^{\frac{3\sigma-1}{4 \sigma}}\\
& \leq & \exp\left(-\frac{1}{2}\left|\frac{s}{8N_3}\right|^{\frac{\sigma+1}{4 \sigma}}\right).
\end{IEEEeqnarray*}
Combining the estimates on $\mathrm{I}$ and $\mathrm{II}$ completes the proof of the lemma.
\end{proof}
\subsection{Proof of Lemma \ref{lem:convolutionstability2}}

The proof of Lemma \ref{lem:convolutionstability2} shares many similarities with the proof of Lemma \ref{lem:convolutionstability}.
\begin{proof}
Beginning with \eqref{eq:2 convolve1}, assume $|s| \geq N_2/4$ and write
\[|H * G (s) - H (s)| \leq \sum_{|t| \geq N_2} |H (s-t) G (t)|.\]
Apply the estimate \eqref{eq:2 HVeryLarge} with $|t|/2$ in place of $s$ and the estimate \eqref{eq:2 GEverywhere}. Then
\begin{IEEEeqnarray*} {rCl}
|H * G (s) - H (s)| & \lesssim & \delta \sum_{|t| \geq N_2}\exp\left(-\frac{1}{2}\left|\frac{t}{8N_1}\right|^{\frac{3}{4}}\right)\\ 
& \leq & \delta
\end{IEEEeqnarray*}
since the sum may be bounded above by $1$.

In order to prove the estimate \eqref{eq:2 convolve2}, we assume $|s| \geq N_2/4$. Write
\[|H * G (s)| \leq \mathrm{I} + \mathrm{II}, \]
where 
\[\mathrm{I} = \sum_{|t| < N_1} |H (t)G (s-t)|\]
and
\[\mathrm{II} = \sum_{|t| \geq N_1} |H (t)G (s-t)|.\]

For the sum $\mathrm{I}$, apply the estimates \eqref{eq:2 H global} and \eqref{eq:2 GEverywhere}. Then
\begin{IEEEeqnarray*} {rCl}
\mathrm{I} & \lesssim & \delta \sum_{|t| < N_1} 1\\
& \lesssim & \delta N_1\\
& < & \delta^{1/2}.
\end{IEEEeqnarray*}
where the final inequality follows from the fact that $\delta < \frac{1}{N_1^2}$.

For the sum $\mathrm{II}$, consider the term where $s=t$ separately from other summands. Write
\[\mathrm{II} = \sum_{\substack {|t| \geq N_1\\ {s\neq t}}} |H (t)G (s-t)| + |H(s)G(0)|.\]
Apply the estimates \eqref{eq:2 HVeryLarge}, \eqref{eq:2 GEverywhere} and \eqref{eq:2 G(0)}. Then
\begin{IEEEeqnarray*} {rCl}
\mathrm{II} &\lesssim& \delta \sum_{\substack {|t| \geq N_1\\ {s\neq t}}} \exp\left(-\frac{1}{2}\left|\frac{t}{8N_1}\right|^{\frac{3}{4}}\right)  + \exp\left(-\frac{1}{2}\left|\frac{s}{8N_1}\right|^{\frac{3}{4}}\right)\\
& \lesssim & \delta^{1/2}.
\end{IEEEeqnarray*}
The last inequality is implied by the bounds
\[\sum_{\substack {|t| \geq N_1\\ {s\neq t}}} \exp\left(-\frac{1}{2}\left|\frac{t}{8N_1}\right|^{\frac{3}{4}}\right) \lesssim 1\]
and
\[\exp\left(-\frac{1}{2}\left|\frac{s}{8N_1}\right|^{\frac{3}{4}}\right) \lesssim \delta^{1/2}.\]

For the final estimate \eqref{eq:2 convolve3}, assume $|s|\geq 8N_3$ and write
\[|H * G (s)| \leq \mathrm{I} + \mathrm{II}, \]
where 
\[\mathrm{I} = \sum_{|t| < |s|/2} |H (s-t)G (t)|\]
and
\[\mathrm{II} = \sum_{|t| \geq |s|/2} |H (s-t)G (t)|.\]

For the sum $\mathrm{I}$, use the fact that $|s - t| \geq |s|/2$ and apply \eqref{eq:2 G global} and \eqref{eq:2 HVeryLarge} with $|s|/2$. Then
\begin{IEEEeqnarray*} {rCl}
 \mathrm{I} & \lesssim & \sum_{|t| < |s|/2} \exp\left(-\frac{1}{2}\left|\frac{s}{16N_1}\right|^{\frac{3}{4}}\right)\\ 
 & \lesssim & \exp\left(-\frac{1}{2}\left|\frac{s} {8N_3}\right|^{\frac{3}{4}}\right).
\end{IEEEeqnarray*}

For the sum $\mathrm{II}$, apply \eqref{eq:2 H global} and \eqref{eq:2 GVeryLarge} with $|t|$ in place of $s$. Then
\begin{IEEEeqnarray*} {rCl}
 \mathrm{II} & \lesssim & \sum_{|t| > |s|/2} \exp\left(-\frac{1}{2}\left|\frac{t}{2N_3}\right|^{\frac{3}{4}}\right)\\ 
 & \leq & \exp\left(-\frac{1}{2}\left|\frac{s} {8N_3}\right|^{\frac{3}{4}}\right).
\end{IEEEeqnarray*}
This completes the proof of Lemma \ref{lem:convolutionstability2}.
\end{proof}

\section{Doubling functions}
\begin{definition}
If $f : \mathbb{R}^+ \to \mathbb{R}^+$ is a decreasing or eventually decreasing function, we say that $f$ is \textbf{doubling} if $f(\xi/2) \lesssim f(\xi)$ for all sufficiently large $\xi$.
\end{definition}
We will need a few basic facts about doubling functions.
\begin{lemma}
The function $\theta(\xi)$ is doubling.
\end{lemma}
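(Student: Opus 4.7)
The plan is to show directly that $\theta(\xi/2)/\theta(\xi)$ is bounded by a constant depending only on $\sigma$ (and on the implicit constants in $\chi$). The approach is to compare the two quantities $\psi^{-1}(1/\xi)$ and $\psi^{-1}(2/\xi)$ using the polynomial decay hypothesis \eqref{eq:sigmadef}, and then to compare $\chi$ evaluated at these two points using the subpolynomial growth hypothesis \eqref{eq:chigrowth}.

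Concretely, set
\[
q_2 = \psi^{-1}(1/\xi), \qquad q_1 = \psi^{-1}(2/\xi),
\]
so that by \eqref{eq:thetadef} we have $\theta(\xi) = 1/(q_2\,\chi(q_2))$ and $\theta(\xi/2) = 1/(q_1\,\chi(q_1))$. Since $\psi$ is decreasing, $q_1 < q_2$, and $q_1, q_2 \to \infty$ as $\xi \to \infty$, so for large enough $\xi$ the hypotheses \eqref{eq:sigmadef} and \eqref{eq:chigrowth} both apply at the pair $(q_1, q_2)$.

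The first key step is to apply \eqref{eq:sigmadef} with this choice of $(q_1, q_2)$. Since $\psi(q_1)/\psi(q_2) = (2/\xi)/(1/\xi) = 2$, the inequality $(q_2/q_1)^\sigma \leq \psi(q_1)/\psi(q_2) = 2$ yields $q_2/q_1 \leq 2^{1/\sigma}$. The second key step is to feed this ratio bound into \eqref{eq:chigrowth}: fixing any $\epsilon > 0$, we obtain $\chi(q_2)/\chi(q_1) < (q_2/q_1)^\epsilon \leq 2^{\epsilon/\sigma}$. Combining these bounds,
\[
\frac{\theta(\xi/2)}{\theta(\xi)} \;=\; \frac{q_2\,\chi(q_2)}{q_1\,\chi(q_1)} \;\leq\; 2^{(1+\epsilon)/\sigma},
\]
which is a constant independent of $\xi$. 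This establishes the doubling property.

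I do not anticipate any real obstacle: the argument is a direct translation of the two polynomial-type hypotheses on $\psi$ and $\chi$ into the single required estimate on $\theta$. The only mild subtlety is confirming that \eqref{eq:sigmadef} and \eqref{eq:chigrowth} are genuinely applicable, which reduces to the monotonicity of $\psi$ together with $\psi^{-1}(1/\xi) \to \infty$ as $\xi \to \infty$; both are immediate from the hypotheses of Theorem \ref{thm:slowdecay}.
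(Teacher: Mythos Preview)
Your proposal is correct and follows essentially the same route as the paper: set $q_1=\psi^{-1}(2/\xi)$, $q_2=\psi^{-1}(1/\xi)$, use \eqref{eq:sigmadef} to bound $q_2/q_1\le 2^{1/\sigma}$, then use \eqref{eq:chigrowth} to control the $\chi$ ratio, arriving at $\theta(\xi/2)/\theta(\xi)\le 2^{(1+\epsilon)/\sigma}$. The paper additionally records, via $\theta(\xi)\approx\xi^{-1/\tau}$, that $\theta$ is eventually decreasing (which is part of the paper's definition of ``doubling''), so you may wish to add a sentence to that effect.
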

\begin{proof}
The fact that $\theta(\xi) \approx \xi^{-1/\tau}$ implies that $\theta(\xi)$ is eventually decreasing. To see that $\theta(\xi)$ is doubling, note that for sufficiently large $q_1$ and $q_2$ with $q_1 < q_2$, we have the assumption \eqref{eq:sigmadef}, which is reproduced below for convenience.
\[\frac{\psi(q_1)}{\psi(q_2)} \geq \left(\frac{q_2}{q_1} \right)^{\sigma}.\]
Since $\psi^{-1}$ is decreasing, we have that $\psi^{-1}(1/\xi) > \psi^{-1}(2/\xi)$. If $\xi$ is sufficiently large that \eqref{eq:sigmadef} applies with $q_1 = \psi^{-1}(2/\xi)$ and $q_2 = \psi^{-1}(1/\xi)$, then we have
\[\frac{\psi(q_1)}{\psi(q_2)} = \frac{2/\xi}{1/\xi} \geq \left(\frac{\psi^{-1}(1/\xi)}{\psi^{-1}(2/\xi)} \right)^{\sigma}.\]
Hence
\begin{IEEEeqnarray*}{Cl}
& \frac{\theta(\xi/2)}{\theta(\xi)} \\
= & \frac{\psi^{-1}(1/\xi) \chi(\psi^{-1}(1/\xi))}{\psi^{-1}(2/\xi) \chi(\psi^{-1}(2/\xi))} \\
\leq & \left(\frac{\psi^{-1}(1/\xi)}{\psi^{-1}(2/\xi)} \right)^{1 + \epsilon} \\
\leq & 2^{(1 + \epsilon)/\sigma}.
\end{IEEEeqnarray*}
Hence $\theta(\xi)$ is doubling.
\end{proof}
Next, we show under very general conditions that a function with limit $0$ must admit a decreasing, doubling majorant.
\begin{lemma}\label{lem:doublingmajorant}
Suppose that $M: \mathbb{Z} \to \mathbb{C}$ is any function such that $|M(s)| \to 0$ as $|s| \to \infty$. Then there is a decreasing function $N: \mathbb{R}^+ \to \mathbb{R}^+$ such that $N(\xi) \to 0$ as $\xi \to \infty$ satisfying the doubling property such that $|M(s)| \leq N(|s|)$ for all $s \in \mathbb{Z}$.
\end{lemma}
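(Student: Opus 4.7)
The plan is to reduce to a monotone sequence on a dyadic scale, smooth it in a way that forces the doubling property, and then extend piecewise to a function on $\mathbb{R}^+$.

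First, I would form the natural decreasing majorant by setting $\tilde N(\xi) = \sup_{|s| \geq \xi,\, s \in \mathbb{Z}} |M(s)|$. The hypothesis $|M(s)| \to 0$ makes $\tilde N$ well-defined, decreasing, and tending to $0$, and of course $|M(s)| \leq \tilde N(|s|)$. The only thing that could fail is the doubling property: $\tilde N$ might drop arbitrarily sharply between $\xi/2$ and $\xi$. The remainder of the argument is just a smoothing of $\tilde N$ from the left.

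Next, I would discretize by letting $b_k = \tilde N(2^k)$, so that $b_k$ is a decreasing null sequence. Define
\[
c_k \;=\; \sup_{j \leq k}\, b_j \, 2^{\,j - k}.
\]
Taking $j=k$ shows $c_k \geq b_k$. Splitting the supremum as $c_k = \max(b_k,\, \tfrac{1}{2}c_{k-1})$ shows at once that $\{c_k\}$ is decreasing (since $b_k \leq b_{k-1} \leq c_{k-1}$) and, reading the same identity the other way, that $c_{k-1} \leq 2 c_k$, which is exactly a doubling estimate on the dyadic scale. For $c_k \to 0$, given $\varepsilon > 0$ pick $J$ with $b_j < \varepsilon/2$ for $j \geq J$; then for $k$ large, $\sup_{j < J} b_j\, 2^{j-k}$ is $O(2^{-k})$ and $\sup_{J \leq j \leq k} b_j\, 2^{j-k} \leq \sup_{j \geq J} b_j < \varepsilon/2$, so $c_k < \varepsilon$ eventually.

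Finally, I would extend to $\mathbb{R}^+$ by setting $N(\xi) = c_k$ for $\xi \in [2^k, 2^{k+1})$ (and $N(\xi) = c_0$ for $\xi < 1$, say). Then $N$ is decreasing since $c_k$ is, and $N(\xi) \to 0$ as $\xi \to \infty$. For $\xi \in [2^k,2^{k+1})$ we have $\xi/2 \in [2^{k-1},2^k)$, so $N(\xi/2) = c_{k-1} \leq 2 c_k = 2 N(\xi)$, which is the doubling property. For the majorization, if $|s| \in [2^k,2^{k+1})$ then $|M(s)| \leq b_k \leq c_k = N(|s|)$, as required.

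I do not expect a real obstacle here; the only point requiring a moment of care is the simultaneous verification that $c_k \to 0$ and that the doubling ratio stays bounded, and both fall out of the defining identity $c_k = \max(b_k, \tfrac{1}{2}c_{k-1})$. The factor $2$ in $b_j\, 2^{j-k}$ is inessential — any base $\lambda > 1$ would give the doubling constant $\lambda$ — so one has flexibility if a tighter constant is ever needed.
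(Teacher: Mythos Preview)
Your argument is correct and is a genuinely different construction from the paper's. Both proofs begin the same way, by passing to the decreasing envelope $\tilde N(\xi)=\sup_{|s|\ge\xi}|M(s)|$ (the paper calls this $M_1$). After that the paper smooths by taking a Ces\`aro-type average,
\[
N(\xi)=\frac{1}{\lfloor\xi\rfloor+1}\sum_{t\le\xi,\ t\in\mathbb{N}} M_1(t),
\]
and then runs a somewhat lengthy chain of inequalities to obtain $N(s/2)\le 8N(s)$. Your dyadic smoothing $c_k=\sup_{j\le k}b_j2^{j-k}$ with the recursion $c_k=\max(b_k,\tfrac12 c_{k-1})$ is cleaner: it yields monotonicity and the doubling bound $c_{k-1}\le 2c_k$ in one line, with the better explicit constant $2$, and the verification that $c_k\to 0$ is also more transparent than the averaging argument. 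The paper's version has the minor advantage of producing a function defined naturally at every real $\xi$ rather than a dyadic step function, but since the doubling property is only needed for large $\xi$ this is irrelevant to the application. One small point: your extension sets $N(\xi)=c_0=\tilde N(1)$ for $\xi<1$, which need not dominate $|M(0)|$; this is trivially repaired by enlarging $c_0$ (or simply noting that the case $s=0$ is handled separately wherever the lemma is applied).
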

\begin{proof}
First, we replace $M$ by a decreasing function $M_1 : \mathbb{R}^+ \to \mathbb{R}^+$ as follows. For $s \in \mathbb{N}$, define
\[M_1(s) = \sup_{|t| \geq s} |M(t)|.\]
Then $M_1$ is decreasing on $[0,\infty)$, $|M_1(s)| \leq M(|s|)$ for all $s \in \mathbb{Z}$, and $\lim_{s \to \infty} M_1(s) = 0$.

We construct $N$ by taking the average of $M$. For $\xi \in \mathbb{R}^+$, define
\[N(\xi) = \frac{1}{\lfloor \xi \rfloor + 1} \sum_{\substack{t \in \mathbb{N} \\ t \leq \xi}} M_1(t).\]
As $N$ is an average of a decreasing function, it follows that $N$ is decreasing; moreover, since $M_1(t) \to 0$ as $t \to \infty$, it follows that $N(\xi) \to 0$ as $\xi \to \infty$. Furthermore, it is easy to see that $M_1(s) \leq N(s)$ for $s \in \mathbb{N}$:
\begin{IEEEeqnarray*}{rCl}
N(s) & = & \frac{1}{s + 1} \sum_{t=0}^s M_1(t) \\
& \geq & \frac{1}{s + 1} \sum_{t = 0}^s M_1(s) \\
& = & \frac{1}{s + 1} (s + 1) M_1(s) \\
& = & M_1(s),
\end{IEEEeqnarray*}
So $|M(s)| \leq M_1(|s|) \leq N(|s|))$ for all $s \in \mathbb{Z}$. 

It only remains to verify that $N(s)$ has the doubling property \eqref{eq:doubling}. We have for $s \neq 0$ that
{\allowdisplaybreaks \begin{IEEEeqnarray*}{rCl}
N(s/2) & = & \frac{1}{\lfloor s/2 \rfloor + 1} \sum_{\substack{t \leq s/2 \\ t \in \mathbb{N}}} M_1(t) \\
& \leq &   \frac{1}{\lfloor s/2 \rfloor + 1} \sum_{\substack{t \leq \lfloor s/2 \rfloor \\ t \in \mathbb{N}}} M_1(t) + \frac{1}{\lfloor s/2 \rfloor + 1} \sum_{\substack{\lfloor s/2 \rfloor + 1 \leq t \leq 2 \lfloor{s/2} \rfloor  + 1 \\ t \in \mathbb{N}}} M_1(t)  \\
& \leq &   \frac{1}{\lfloor s/2 \rfloor + 1} \sum_{\substack{t \leq \lfloor s/2 \rfloor \\ t \in \mathbb{N}}} M_1(t) + \frac{1}{\lfloor s/2 \rfloor + 1} \sum_{\substack{\lfloor s/2 \rfloor + 1 \leq t \leq s + 1 \\ t \in \mathbb{N}}} M_1(t)  \\
& \leq & \frac{2}{2 \lfloor s/2 \rfloor + 2} \sum_{\substack{t \leq s + 1 \\ t \in \mathbb{N}}} M_1(t). \\
& \leq &\frac{2}{s} \sum_{\substack{t \leq s + 1 \\ t \in \mathbb{N}}} M_1(t) \\
& \leq & \frac{2}{s} \sum_{\substack{t \leq s \\ t \in \mathbb{N}}} M_1(t) + \frac{2}{s} M_1(s + 1) \\
& \leq & \frac{2}{s} \sum_{\substack{t \leq s \\ t \in \mathbb{N}}} M_1(t) + \frac{2}{s} \sum_{\substack{t \leq s \\ t \in \mathbb{N}}}  M_1(t) \\
& \leq & \frac{4}{s} \sum_{\substack{t \leq s \\ t \in \mathbb{N}}} M_1(t) \\
& \leq & \frac{8}{s + 1} \sum_{\substack{t \leq s \\ t \in \mathbb{N}}} M_1(t) \\
& = & 8 N(s),
\end{IEEEeqnarray*}}
as desired.
\end{proof}

\section{Single-factor estimates}\label{sec:singlescale}
\subsection{Single-factor estimates for Theorem \ref{thm:slowdecay} and Theorem \ref{thm:compact}}\label{subsec:singlescaleslowdecay}
In this section, we construct a function $g_k$ with its support contained in intervals centered at rational numbers with denominator close to some number $M_k$. Let $\psi(q)$ be a function satisfying \eqref{eq:taudef} and \eqref{eq:sigmadef}. Suppose $\chi(q)$ is a function satisfying \eqref{eq:chicondition}. In the case of Theorem \ref{thm:compact}, we take $\chi(q) \equiv 1$. 

Let $M_k$ be a large positive integer. We choose an integer $\beta(M_k)$ and a positive real number $C_k$ so that

\[1 \leq \sum_{\substack{M_k \leq q \leq \beta(M_k) \\ q \text{ prime} }} \frac{1}{q \chi(q)} = C_k \leq 2.\]

The support of $g_k$ will be contained in a family of intervals centered at rational numbers whose denominator is a prime number between $M_k$ and $\beta(M_k)$. 

We choose a nonnegative function $\phi \in C_c^\infty$ with support in the interval $[-1/2,0]$ satisfying the conditions
\begin{equation}\label{eq:phinormalization}
\hat \phi (0) = 1
\end{equation}
and
\begin{equation}\label{eq:schwartztail}
 \hat \phi (s) \lesssim \exp \left( -|s|^{\frac{\sigma + 1}{2 \sigma}}\right).
\end{equation}
The existence of such a function is guaranteed by a result of Ingham \cite{Ingham34}.

Let 
\[\phi_{p,q}(x) = \frac{1}{q^2\chi(q) \psi (q)} \phi\left(\frac{1}{\psi(q)}\left(x-\frac{p}{q}\right)\right)\]
Now define
\[g_k (x) = C_k^{-1} \sum_{\substack {M_k \leq q < \beta(M_k) \\ q \text{ prime}}} \sum_{p = 1}^{q} \phi_{p,q}(x).\]
Observe that the function $g_k$ is supported on the interval $[0,1]$.
\begin{lemma} \label{lemma: gkestimate2.1}
Suppose $g_k$ is defined as above. Then we have the following estimates for $s \in \mathbb{Z}$.
\begin{IEEEeqnarray}{rCll} 
\hat g_k(0) & = & 1 \label{eq:gkhat02.1} \\
\hat g_k(s) & = & 0 \quad & \text{if $0 < |s| < M_k$} \label{eq:gkhatssmall2.1} \\
|\hat g_k(s)| & \lesssim & \theta(|s|) \quad & \text{if $s \neq 0$}  \label{eq:gkhateverywhere2.1}\\
|\hat g_k(s)| & \lesssim & \exp \left(-\frac{1}{2} (\psi(\beta(M_k))^{2}|s|)^{\frac{\sigma + 1}{4 \sigma}} \right) \quad & \text{if $|s| \geq \psi(\beta(M_k))^{-2}$.}\label{eq:gkhatslarge2.1}
\end{IEEEeqnarray}
\end{lemma}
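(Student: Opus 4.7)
The plan is to exploit the arithmetic structure of the sum defining $g_k$ to obtain a closed-form expression for $\hat g_k(s)$, and then estimate that expression in each regime. Since $\phi_{p,q}$ is an $L^1$-normalized rescaled translate of $\phi$ (with an extra $\frac{1}{q^2\chi(q)}$ factor), we have
\[\widehat{\phi_{p,q}}(s) = \frac{1}{q^2 \chi(q)} e^{-2\pi i s p/q} \hat \phi(\psi(q) s).\]
Summing over $p = 1, \ldots, q$ produces the orthogonality relation $\sum_{p=1}^q e^{-2\pi i s p/q} = q \cdot \mathbf{1}_{q \mid s}$, giving
\[\hat g_k(s) = C_k^{-1} \sum_{\substack{M_k \leq q < \beta(M_k) \\ q \text{ prime}, \, q \mid s}} \frac{1}{q \chi(q)} \hat \phi(\psi(q) s).\]
Claims \eqref{eq:gkhat02.1} and \eqref{eq:gkhatssmall2.1} are immediate from this formula: at $s = 0$ every prime divides $s$ and $\hat \phi(0) = 1$, so the sum collapses to $C_k^{-1} \cdot C_k = 1$; for $0 < |s| < M_k$, no prime $q \geq M_k$ can divide $s$, and the sum is empty.

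For \eqref{eq:gkhateverywhere2.1} I would set the threshold $Q := \psi^{-1}(1/|s|) \approx |s|^{1/\tau}$ and split the sum at $q = Q$. For primes $q > Q$ the factor $|\hat \phi(\psi(q)s)| \leq 1$, and \eqref{eq:sigmadef} together with the subpolynomial bound \eqref{eq:chigrowth} give $\frac{1}{q \chi(q)} \lesssim \theta(|s|)$; the divisibility constraint $q \mid s$ limits the number of such primes to $O(\log |s| / \log Q) = O(\tau)$, so this range contributes $O(\theta(|s|))$. For $q \leq Q$, I partition the primes into dyadic shells $q \in (Q \cdot 2^{-j-1}, Q \cdot 2^{-j}]$, on which \eqref{eq:sigmadef} yields $\psi(q)|s| \gtrsim 2^{j\sigma}$. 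The Ingham-type tail \eqref{eq:schwartztail} then contributes $\exp(-2^{cj(\sigma+1)/(2\sigma)})$ per shell, which swamps the at-worst polynomial-in-$2^j$ loss from $\frac{1}{q\chi(q)}$ and from counting prime divisors in the shell. Summing the resulting geometric series in $j$ gives the bound $\lesssim \theta(|s|)$.

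Finally, claim \eqref{eq:gkhatslarge2.1} is handled by noting that every prime $q$ appearing in the sum satisfies $q < \beta(M_k)$, so $\psi(q) \geq \psi(\beta(M_k))$ and \eqref{eq:schwartztail} gives each term the bound $\exp\bigl(-(\psi(q)|s|)^{(\sigma+1)/(2\sigma)}\bigr) \leq \exp\bigl(-(\psi(\beta(M_k))|s|)^{(\sigma+1)/(2\sigma)}\bigr)$; weakening the exponent to $(\sigma+1)/(4\sigma)$ and replacing $\psi(\beta(M_k))|s|$ by $\psi(\beta(M_k))^2|s|$ leaves ample room to absorb both the $C_k^{-1}$ prefactor and the sum over the (at most polylogarithmic-in-$|s|$) prime divisors. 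The main obstacle is the delicate bookkeeping in the dyadic argument for \eqref{eq:gkhateverywhere2.1}: one must verify that the super-polynomial decay supplied by \eqref{eq:schwartztail} is strong enough to survive all three sub-polynomial losses (growth of $\chi$, loss from $\frac{1}{q\chi(q)}$ in the lower shells, and the combinatorial count of prime divisors) simultaneously.
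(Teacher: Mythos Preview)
Your derivation of the closed form for $\hat g_k(s)$ and the treatment of \eqref{eq:gkhat02.1}, \eqref{eq:gkhatssmall2.1}, \eqref{eq:gkhatslarge2.1}, and the range $q > Q$ in \eqref{eq:gkhateverywhere2.1} match the paper's proof essentially line for line: the same orthogonality relation, the same prime-divisor count $\log|s|/\log Q \lesssim \tau$, and the same absorption argument for the large-$|s|$ tail.

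The genuine divergence is in how you handle the range $q \le Q = \psi^{-1}(1/|s|)$ in \eqref{eq:gkhateverywhere2.1}. The paper does \emph{not} run a dyadic decomposition. Instead it splits this range once more at $q = \psi^{-1}(1/\sqrt{|s|})$ into regions $\mathrm{II}$ and $\mathrm{III}$. In region $\mathrm{III}$ one has $\psi(q)|s| \ge \sqrt{|s|}$, so the Ingham tail \eqref{eq:schwartztail} already gives $\exp(-|s|^{(\sigma+1)/(4\sigma)})$, which kills everything. In region $\mathrm{II}$ the paper proves a pointwise monotonicity statement: using \eqref{eq:sigmadef} and \eqref{eq:chigrowth} it shows that the summand $\exp(-|\psi(q)s|^{(\sigma+1)/(2\sigma)})/(q\chi(q))$ is \emph{increasing} in $q$ on this range, so each of the $O(1)$ terms is dominated by its value at the right endpoint $q = Q$, which is exactly $\theta(|s|)$.

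Your dyadic argument is a legitimate alternative and is arguably more robust, since it avoids the somewhat delicate increment inequality the paper verifies for region $\mathrm{II}$. The cost is the bookkeeping you flag at the end: in the first few shells ($j=0,1$) the factor $\exp(-2^{j(\sigma+1)/2})$ is only $O(1)$, so there you must use \eqref{eq:chigrowth} to compare $\chi(q)$ with $\chi(Q)$ and the divisor count $\lesssim \tau$ exactly as in the $q>Q$ range; only for larger $j$ does the doubly-exponential decay take over and absorb the polynomial losses $2^{(1+\epsilon)j}$ and the cruder divisor bound. Once that split is made explicit, your approach goes through.
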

\begin{proof}
A simple calculation gives us that

\[\hat g_k (s) = C_k^{-1} \sum_{\substack {M_k \leq q < \beta(M_k) \\ q \text{ prime}}} \frac{1}{q^2 \chi(q)}\sum_{p = 1}^{q} \e\left(\frac{p}{q}s\right) \hat \phi\left(\psi(q) s\right)\]
where $\e(u) = \e^{-2 \pi i u}$. The sum in $p$ has the value
\[\sum_{p=1}^q \e \left(\frac{p}{q} s \right) = \begin{cases}
    q & \text{if $q \mid s$} \\
    0 & \text{if $q \nmid s$.}
\end{cases}\]

Therefore, if $s = 0$, then the above sum will be equal to $1$, and if $0 < |s| < M_k$, then the above sum will vanish. This proves \eqref{eq:gkhat02.1} and \eqref{eq:gkhatssmall2.1}. Thus,

\[\hat g_k (s) = C_k^{-1} \sum_{\substack {M_k \leq q < \beta(M_k) \\ {\substack q \text{ prime}} \\ q \mid s}} \frac{\hat \phi\left(\psi(q) s\right)}{q\chi(q)}.\]

For $|s| \geq M_k$, we split the above sum into three pieces according to the size of $q$. We write

\[\hat g_k (s) = C_k^{-1} (\mathrm{I} + \mathrm{II} + \mathrm{III}),\]
where
\begin{IEEEeqnarray*}{rCl}
\mathrm{I} & = & \sum_{\substack {q \geq \psi^{-1}(1/|s|) \\ {\substack q \text{ prime}} \\ q \vert s}} \frac{\hat \phi\left(\psi(q) s\right)}{q\chi(q)}, \\
\mathrm{II} & = &  \sum_{\substack {\psi^{-1} (1/ \sqrt{|s|}) \leq {q \leq \psi^{-1}(1/|s|)} \\ {\substack q \text{ prime}} \\ q \vert s}} \frac{\hat \phi\left(\psi(q) s\right)}{q\chi(q)}, \\
\mathrm{III} & = &  \sum_{\substack {q < \psi^{-1}(1/\sqrt{|s|}) \\ {\substack q \text{ prime}} \\ q \vert s}} \frac{\hat \phi\left(\psi(q) s\right)}{q\chi(q)}.
\end{IEEEeqnarray*}

\paragraph*{Estimate for $\mathrm{I}$.} For the sum $\mathrm{I}$, we observe that the number of summands is $\lesssim 1$. This observation is a consequence of assumption \eqref{eq:taudef} since it is implied that for a large enough $q$ depending on $\epsilon$ we have,
\[q^{-\tau-\epsilon} \leq \psi(q) \leq q^{-\tau + \epsilon}\]
which gives us
\[t^{-\frac{1}{\tau + \epsilon}}\leq \psi^{-1}(t) \leq t^{- \frac{1}{\tau - \epsilon}}\]
since $\psi$ is decreasing. Taking logarithms, we conclude
\begin{equation} \label{eq:bound on psi inverse}
\frac{1}{\tau + \epsilon} \log |s| \leq \log \psi^{-1} \left(\frac{1}{|s|} \right) \leq  \frac{1}{\tau - \epsilon} \log |s|.    
\end{equation}
Hence, the number of summands in the sum $\mathrm{I}$ is at most $\frac{\log |s|}{\log \psi^{-1}(1/|s|)} \lesssim 1$. 

Apply the bound $\hat \phi\left(\psi(q) s\right) \leq 1$ to each summand to get
\[\sum_{\substack {q \geq \psi^{-1}(1/|s|) \\ {\substack q \text{ prime}} \\ q \vert s}} \frac{\hat \phi\left(\psi(q) s\right)}{q\chi(q)} \lesssim \theta(|s|).\]

\paragraph*{Estimate for $\mathrm{II}$.} For the sum $\mathrm{II}$, we observe that there are $\lesssim 1$ summands by a similar argument as for the sum $\mathrm{I}$. We apply the bound \eqref{eq:schwartztail} to show that the summand is bounded above by
\[\frac{\exp(- |\psi(q) s|^{\frac{\sigma + 1}{2 \sigma}})}{q \chi(q)}\]
If $q = \psi^{-1}(1/|s|)$, then  
\begin{equation}\label{eq:sum2bound}
\frac{\exp(- |\psi(q) s|^{\frac{\sigma + 1}{2 \sigma}})}{q \chi(q)} \lesssim \theta(|s|).
\end{equation}

It is enough to show for each $q < \psi^{-1}(1/|s|)$ that 

\begin{equation}\label{eq:summandincreasing}
\frac{\exp (- | \psi(q + 1) s|^{\frac{\sigma + 1}{2 \sigma}})}{(q + 1) \chi(q + 1)} - \frac{\exp(- |\psi(q) s|^{\frac{\sigma + 1}{2 \sigma}})}{q \chi(q)} > 0.
\end{equation}
If the inequality \eqref{eq:summandincreasing} holds for all $q < \frac{1}{|s|}$, then the summand is increasing in this domain, and is therefore maximized when $q = \frac{1}{|s|}$, establishing the bound \eqref{eq:sum2bound} for such $q$.

In order to establish \eqref{eq:summandincreasing}, it is enough to verify that the numerator of the difference is positive. This numerator is
\[\exp (- |\psi(q + 1) s|^{\frac{\sigma + 1}{2 \sigma}}) q \chi(q) - \exp( - |\psi(q) s|^{\frac{\sigma + 1}{2 \sigma}}) (q + 1) \chi(q + 1).\]
Since the logarithm is an increasing function, it is enough to show that 
\[-|\psi(q + 1) s|^{\frac{\sigma + 1}{2 \sigma}} + \log q + \log \chi(q) > - |\psi(q) s|^{\frac{\sigma + 1}{2 \sigma}} + \log (q + 1) + \log \chi(q + 1).\]
This inequality is equivalent to
\begin{equation}\label{eq:summandincreasing2}
\log (q + 1) - \log q + \log \chi(q + 1) - \log \chi(q) < |s|^{\frac{\sigma + 1}{2 \sigma}} (\psi(q)^{\frac{\sigma + 1}{2 \sigma}} - \psi(q + 1)^{\frac{\sigma + 1}{2 \sigma}}).
\end{equation}
The Taylor series for the logarithm guarantees that $\log (q + 1) - \log q = \frac{1}{q} + O \left(\frac{1}{q^2}\right)$; the subpolynomial growth condition \eqref{eq:chigrowth} guarantees that $\log \chi(q + 1) - \log \chi(q) = o \left(\frac{1}{q} \right)$. In total, the left side of inequality \eqref{eq:summandincreasing2} is $\frac{1}{q} + o \left(\frac{1}{q} \right)$. On the other hand, since we are in the regime where $q < \psi^{-1}(1/|s|)$, the right side of \eqref{eq:summandincreasing2} is bounded below by
\[|s|^{\frac{\sigma + 1}{2 \sigma}} (\psi(q)^{\frac{\sigma + 1}{2 \sigma}} - \psi(q + 1)^{\frac{\sigma + 1}{2 \sigma}}) \geq 1 - \left(\frac{\psi(q + 1)}{\psi(q)}\right)^{\frac{\sigma + 1}{2 \sigma}}.\]
By \eqref{eq:sigmadef}, we have
\[\left(\frac{\psi(q + 1)}{\psi(q)}\right)^{\frac{\sigma + 1}{2 \sigma}} \leq \left(\frac{q}{q + 1} \right)^{\frac{\sigma + 1}{2}}.\]
Hence,
\begin{IEEEeqnarray*} {rCl}
1 - \left(\frac{\psi(q + 1)}{\psi(q)}\right)^{\frac{\sigma + 1}{2 \sigma}} & \geq & 1 - \left(\frac{q}{q + 1} \right)^{\frac{\sigma + 1}{2}} \\
& = & \left(\frac{\sigma + 1}{2}\right) \frac{1}{q} + o \left(\frac{1}{q^2} \right).  
\end{IEEEeqnarray*}

Since $\frac{\sigma + 1}{2} > 1$, we see that the inequality \eqref{eq:summandincreasing2} holds for $\psi(1 / \sqrt{|s|}) \leq q \leq \psi(1 / |s|)$ provided that $M_k$ (and hence $|s|$) is sufficiently large.

Hence we have the estimate
\[\mathrm{II} \lesssim \theta(|s|).\]
\paragraph*{Estimate for $\mathrm{III}$.} For the final sum, we apply the estimate \eqref{eq:schwartztail} to $\hat \phi$ to get
\begin{IEEEeqnarray*} {rCl}
\sum_{\substack {q <\psi^{-1}(1/\sqrt{|s|}) \\ {\substack q \text{ prime}} \\ q \vert s}} \frac{\hat \phi\left(\psi(q) s\right)}{q\chi(q)} &\lesssim& \sum_{\substack {q < \psi^{-1}(1/ \sqrt{s}) \\ {\substack q \text{ prime}} \\ q \vert s}} \frac{\exp \left(-\left| \psi(q) s \right|^\frac{\sigma + 1}{2 \sigma}\right)}{q}  \\
& \leq & \sum_{\substack {q <\psi^{-1}(1/\sqrt{|s|}) \\ {\substack q \text{ prime}} \\ q \vert s}} \frac{\exp \left(-\left|s \right|^\frac{\sigma + 1}{4 \sigma}\right)}{q}\\
& \leq & \exp \left(-\left|s \right|^\frac{\sigma + 1}{4 \sigma}\right) \log \left(\psi^{-1}(1/\sqrt{|s|})\right)\\
& \lesssim & \theta(|s|).
\end{IEEEeqnarray*}

For the estimate \eqref{eq:gkhatslarge2.1}, we observe that $|s|$ is sufficiently large for the estimate \eqref{eq:schwartztail} to apply to $\hat \phi$ for every $q \in [M_k, \beta(M_k)]$. As such
\begin{IEEEeqnarray*} {rCl}
\left|\hat g_k(s)\right| & \lesssim &\sum_{\substack {M_k \leq q \leq \beta(M_k) \\ {\substack q \text{ prime}} \\ q \vert s}} \frac{\exp\left(-(\psi(q)|s|)^{\frac{\sigma +1}{2\sigma}}\right)}{{\chi(q)q}}\\
& \leq & \sum_{\substack {M_k \leq q \leq \beta(M_k) \\ {\substack q \text{ prime}} \\ q \vert s}} \frac{\exp\left(-(\psi(\beta(M_k))|s|)^{\frac{\sigma +1}{2\sigma}}\right)}{{\chi(M_k)M_k}}.\\
\end{IEEEeqnarray*}
The inequality $|s| \geq \psi(\beta(M_k))^{-2}$ gives us $\psi(\beta(M_k)) \leq \frac{1}{\sqrt{|s|}}$. Therefore,
\[\left|\hat g_k(s)\right|  \lesssim \sum_{\substack {M_k \leq q \leq \beta(M_k) \\ {\substack q \text{ prime}} \\ q \vert s}} \frac{\exp\left(-|s|^{\frac{\sigma +1}{4\sigma}}\right)}{{\chi(M_k)M_k}}.\]

Observe that the number of summands is less than $\beta(M_k)$. Moreover, we may disregard the denominator, for large $M_k$, to derive an upper bound. Hence,
\[\left|\hat g_k(s)\right| \lesssim \beta(M_k) \exp\left(-|s|^{\frac{\sigma +1}{4\sigma}}\right).\]

Now, we need to eliminate the $\beta(M_k)$ term from the estimate but this will be at the cost some decay from the exponent. Rewrite the above inequality as 
\begin{IEEEeqnarray*} {rCl}
\left|\hat g_k(s)\right| &\lesssim& \beta(M_k) \exp\left(-\frac{1}{2}|s|^{\frac{\sigma +1}{4\sigma}}\right)\exp\left(-\frac{1}{2}|s|^{\frac{\sigma +1}{4\sigma}}\right) \\
& \leq &  \beta(M_k) \exp\left(-\frac{1}{2}\psi(\beta(M_k))^{-\frac{\sigma +1}{2\sigma}}\right)\exp\left(-\frac{1}{2}|s|^{\frac{\sigma +1}{4\sigma}}\right) \IEEEyesnumber \label{eq:SplitExponent}
\end{IEEEeqnarray*}
when we apply $|s| \geq \psi(\beta(M_k))^{-2}$. From the equation \eqref{eq:taudef}, when $M_k$ is large enough we have
\[\frac{1}{2} \tau \leq -\frac{\log \psi(\beta(M_k))}{\log \beta(M_k)} \leq 2 \tau\]
which may be rewritten as
\[-\frac{1}{2} \tau \log \beta(M_k) \geq \log \psi(\beta(M_k)) \geq -2 \tau \log \beta(M_k).\]
Exponentiating gives
\begin{equation} \label{eq:PsiBetaInequality}
\beta(M_k)^{-\frac{1}{2} \tau} \geq \psi(\beta(M_k)) \geq \beta(M_k)^{-2 \tau}.
\end{equation}

Applying the upper bound from  the equation \eqref{eq:PsiBetaInequality} to \eqref{eq:SplitExponent}, we get
\[\left|\hat g_k(s)\right| \lesssim \beta(M_k) \exp\left(-\frac{1}{2}\beta(M_k)^{\frac{\tau(\sigma +1)}{4\sigma}}\right)\exp\left(-\frac{1}{2}|s|^{\frac{\sigma +1}{4\sigma}}\right).\]
For large $M_k$, we observe that the exponential term dependent on $M_k$ is decaying much faster than $\beta(M_k)$. Hence,
\begin{IEEEeqnarray*}{rCl}
\left|\hat g_k(s)\right| & \lesssim & \exp\left(-\frac{1}{2}|s|^{\frac{\sigma +1}{4\sigma}}\right) \\
& \lesssim & \exp \left(-\frac{1}{2} (\psi(\beta(M_k))^2 |s|)^{\frac{\sigma + 1}{4 \sigma}} \right).
\end{IEEEeqnarray*}
\end{proof}

\subsection{Single-factor estimate for Theorem \ref{thm:fastdecay}}\label{subsec:singlescalefastdecay}
In the case of Theorem \ref{thm:fastdecay}, it is more convenient to choose the function $g_k$ to be supported in a neighborhood of rational numbers with different denominators at very different scales. Thus, only one denominator will meaningfully contribute to the value of $|\hat g_k(s)|$.

As in subsection \ref{subsec:singlescaleslowdecay}, we begin by defining a smooth function $\phi$ with its support in the interval $[-1/2, 0]$ satisfying the conditions 
\begin{equation}\label{eq:phinormalization2}
\hat \phi(0) = 1
\end{equation}
and 
\begin{equation}\label{eq:schwartztail2}
\hat \phi(s) \lesssim \exp \left(-|s|^{3/4} \right).
\end{equation}

Let $n_k$ be an increasing sequence of integers to be specified later. For a given $k$, we choose $q_{k,1}, \ldots, q_{k,n_k}$ of prime numbers as follows. First, we choose $q_{k,1}$ to be a large prime number. We choose the remaining $q_{k,j}$ so that $q_{k,2} \gg \frac{1}{\Psi(q_{k,1})}$, $q_{k,3} \gg \frac{1}{\Psi(q_{k,2})}$,$\ldots$, $q_{k,n_k} \gg \frac{1}{\Psi(q_{k,n_{k-1}})}$. Furthermore, we also assume that for each $j$, we have 
\begin{equation}\label{eq:qkjcondition}
\max \left(\frac{1}{q_{k,j}}, \psi(q_{k,j}) \right) < \frac{1}{2} \psi(q_{k,j-1}).
\end{equation} 
Define
\[g_k(x) = \frac{1}{n_k} \sum_{j = 1}^{n_k} \frac{1}{q_{k,j} \psi(q_{k,j})}\sum_{p = 1}^{q_{k,j}}\phi\left(\frac{1}{\psi(q_{k,j})}\left(x-\frac{p}{q_{k,j}}\right)\right).\]
Then
\[\hat g_k(s) = \frac{1}{n_k} \sum_{j = 1}^{n_k} \frac{1}{q_{k,j}}\sum_{p = 1}^{q_{k,j}}\hat \phi(\psi(q_{k,j}) s) \e\left(\frac{p}{q_{k,j}}s\right).\]

Remove any terms for which $q_{k,j}$ does not divide $s$ to get
\begin{equation}\label{eq:gkhatsum}
\hat g_k(s) = \frac{1}{n_k} \sum_{\substack{1 \leq j \leq n_k\\ q_{k,j} \vert s}} \hat \phi(\psi(q_{k,j}) s) .  
\end{equation}

\begin{lemma}\label{lem:gkestimatefastdecay}
Suppose that $g_k$ is defined as above. Then we have the following estimates for $s \in \mathbb{Z}$.
\begin{IEEEeqnarray}{rCll}
\hat g_k(0) & = & 1 \label{eq:gkhat02.5} \\
\hat g_k(s) & = & 0 \quad & \text{if $0 < |s| < q_{k,1}$} \label{eq:gkhatssmall2.5} \\
|\hat g_k(s)| & \lesssim & \frac{1}{n_k} \quad & s \neq 0 \label{eq:gkeverywhere2.5}\\
|\hat g_k(s)| & \lesssim & \exp \left(-\frac{1}{2} |\psi(q_{k,n_k}) s|^{\frac{3}{4}} \right) \quad & \text{if $|s| \geq \psi(q_{k,n_k})^{-1}$.}\label{eq:gkhatslarge2.5}
\end{IEEEeqnarray}
\end{lemma}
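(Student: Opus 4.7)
The plan is to work directly from the explicit formula \eqref{eq:gkhatsum}, in which orthogonality of the characters mod $q_{k,j}$ has already collapsed the inner sum over $p$: only those indices $j$ with $q_{k,j} \mid s$ contribute. The first two claims \eqref{eq:gkhat02.5} and \eqref{eq:gkhatssmall2.5} fall out immediately: every $q_{k,j}$ divides $0$, so $\hat g_k(0) = \frac{1}{n_k}\sum_j \hat\phi(0) = 1$, while if $0 < |s| < q_{k,1}$ then none of the primes $q_{k,j} \geq q_{k,1}$ can divide $s$ and the sum in \eqref{eq:gkhatsum} is empty.

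For the pointwise bound \eqref{eq:gkeverywhere2.5}, fix $s \neq 0$ and let $j^{*}$ be the largest index $j \leq n_k$ with $q_{k,j} \mid s$; if no such index exists then the sum is empty and the bound is immediate. I bound the single term at $j = j^{*}$ trivially by $|\hat\phi(\psi(q_{k,j^{*}}) s)| \leq 1$, which holds because $\phi \geq 0$ with $\hat\phi(0) = 1$ forces $\|\hat\phi\|_\infty = 1$. For each remaining index $j < j^{*}$ with $q_{k,j} \mid s$, the chain $|s| \geq q_{k,j^{*}} \geq q_{k,j+1}$, together with the construction's requirement that $q_{k,j+1} \gg \psi(q_{k,j})^{-1}$ and with \eqref{eq:qkjcondition}, forces $\psi(q_{k,j})|s|$ to be as large as we wish; the super-polynomial decay \eqref{eq:schwartztail2} then makes $|\hat\phi(\psi(q_{k,j}) s)|$ negligibly small. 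Summing this one $O(1)$ term together with the at most $n_k$ negligible ones and dividing by $n_k$ yields the desired $O(1/n_k)$ bound.

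For \eqref{eq:gkhatslarge2.5}, the hypothesis $|s| \geq \psi(q_{k,n_k})^{-1}$ together with the monotonicity of $\psi$ ensures that $\psi(q_{k,j}) |s| \geq \psi(q_{k,n_k}) |s| \geq 1$ for every $j \leq n_k$, so \eqref{eq:schwartztail2} applies to every surviving term. Since $\psi(q_{k,j}) \geq \psi(q_{k,n_k})$, each such term is dominated by $\exp\bigl(-|\psi(q_{k,n_k}) s|^{3/4}\bigr)$; summing over the at most $n_k$ nonzero terms and dividing by $n_k$ produces the claimed estimate, in fact with the exponential constant tighter than $\tfrac{1}{2}$.

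The main obstacle, to the extent that there is one, is the bookkeeping in step \eqref{eq:gkeverywhere2.5}: one has to verify that the rapid growth of the sequence $q_{k,j}$ prescribed in the construction is strong enough that the contributions from the tail indices $j < j^{*}$ together amount to a quantity bounded independently of $n_k$, so that dividing by $n_k$ actually produces the uniform $O(1/n_k)$ estimate rather than something that could degrade with the number of scales. Everything else reduces to direct manipulation of the closed-form expression for $\hat g_k$ combined with the tail decay \eqref{eq:schwartztail2} of $\hat\phi$.
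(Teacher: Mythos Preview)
Your proof is correct and follows essentially the same strategy as the paper's: isolate one term to bound trivially by $1$, and control the rest via the Schwartz tail \eqref{eq:schwartztail2} combined with the geometric separation \eqref{eq:qkjcondition}. The only cosmetic difference is the pivot index: the paper splits at $j_0(s)$, the largest $j$ with $\psi(q_{k,j})|s|>1$, and then argues that among $j>j_0$ only $j_0+1$ can divide $s$; you instead take $j^{*}$ to be the largest $j$ with $q_{k,j}\mid s$ and argue directly that every $j<j^{*}$ already lies in the Schwartz-tail regime, which is a slightly more streamlined way to reach the same geometric sum $\sum_m \exp(-2^{3m/4})=O(1)$.
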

\begin{proof}
First, it is clear from \eqref{eq:phinormalization2} and \eqref{eq:gkhatsum} that $\hat g_k(0) = 1$, establishing \eqref{eq:gkhat02.5}. Moreover, the sum \eqref{eq:gkhatsum} is seen to be empty if $0 < |s| < q_{k,1}$, establishing \eqref{eq:gkhatssmall2.5}.

To prove \eqref{eq:gkeverywhere2.5}, we split the sum \eqref{eq:gkhatsum} depending on the size of $q_{k,j}$ relative to $s$. Suppose $j_0(s)$ is such that $\psi(q_{k,j_0})|s| > 1$, but such that $\psi(q_{k,j_0 + 1})|s| \leq 1$, taking $j_0(s) = 0$ if $\psi(q_{k,1} )|s| < 1$ or $j_0 = n_k$ if $\psi(q_{k, n_k})|s| > 1$.
\[|\hat g_k(s)|  \leq  \frac{1}{n_k} \sum_{\substack{j_0(s) + 1 \leq j \leq n_k \\ q_{k,j} \vert s}} \left|\hat \phi(\psi(q_{k,j}) s)\right| + \frac{1}{n_k} \sum_{\substack{1 \leq j \leq j_0(s) \\ q_{k,j} \vert s}} \left|\hat \phi(\psi(q_{k,j}) s)\right|\]

For the second sum, we may apply \eqref{eq:schwartztail2}, the Schwartz tail for $\hat \phi$. Hence, using the assumption \eqref{eq:qkjcondition},
\begin{IEEEeqnarray*} {rCl}
\frac{1}{n_k} \sum_{\substack{1 \leq j \leq j_0(s) \\ q_{k,j} \mid s}} \left|\hat \phi(\psi(q_{k,j}) s) \right| & \lesssim & \frac{1}{n_k} \sum_{\substack{1 \leq j \leq j_0(s)}} \exp \left(-\left| \psi(q_{k,j})s\right|^\frac{3}{4}\right) \\
& \lesssim & \frac{1}{n_k} \sum_{1 \leq j \leq j_0(s)} \exp \left( -2^{\frac{3(j_0(s) - j)}{4}} \right) \\
& \lesssim & \frac{1}{n_k}.
\end{IEEEeqnarray*}

For the first sum, recall that $j_0$ is chosen so that $\psi(q_{k,j_0 + 1})s < 1$. Since $q_{k, j} \geq \frac{1}{\psi(q_{k,j_0 + 1})}$ for any $j \geq j_0 + 2$, it follows that $\frac{1}{q_{k,j}} s < 1$ for such $j$. This means that it is impossible for $q_{k,j}$ to divide $s$ for $j > j_0 + 1$. Hence, the only term that can contribute to the sum is the $j = j_0 + 1$ term. To control the contribution of this term, we simply apply the bound  
\[\left|\hat \phi(\psi(q_{k,j}) s)\right| \leq 1\]
to bound the second sum by a constant times $\frac{1}{n_k}$. Thus, for any integer $s \neq 0$, we have the bound
\[|\hat g_k(s)| \lesssim \frac{1}{n_k}.\]
It remains to show the bound \eqref{eq:gkhatslarge2.5}. For $s \geq \psi(q_{k,n_k})^{-1}$, we can in fact apply the Schwartz bound \eqref{eq:schwartztail2} for $\phi$ to every summand in \eqref{eq:gkhatsum}. Hence 
\begin{IEEEeqnarray*}{rCl}
|\hat g_k(s)| & \leq & \frac{1}{n_k} \sum_{\substack{1 \leq j \leq n_k \\ q_{k, j} \mid s}} |\hat \phi(\psi(q_{k,j}) s)| \\
& \lesssim & \frac{1}{n_k} \sum_{1 \leq j \leq n_k} \exp \left( - | \psi(q_{k,j}) s|^{\frac{3}{4}} \right) \\
& \lesssim & \frac{1}{n_k} \sum_{1 \leq j \leq n_k} \exp \left(- |2^{n_k - j} \psi(q_{k, n_k}) s|^{\frac{3}{4}} \right) \\
& \lesssim & \exp \left(- |\psi(q_{k, n_k}) s|^{\frac{3}{4}} \right).
\end{IEEEeqnarray*}
\end{proof}
\section{Stability and convergence of $\hat \mu_{\chi,\omega}$}
In order to prove Theorems \ref{thm:slowdecay}, \ref{thm:compact}, and \ref{thm:fastdecay}, we will piece together the functions $g_k$ provided in Section \ref{sec:singlescale} across multiple scales. Lemmas \ref{lem:convolutionstability} and \ref{lem:convolutionstability2} are used to show that the Fourier transforms $\hat g_k$ of the functions $g_k$ do not exhibit much interference. The construction proceeds slightly differently in the case of Theorem \ref{thm:fastdecay}, as this theorem does not prescribe a specific decay rate for $\hat \mu$. 
\subsection{Construction of $\mu$ for Theorem \ref{thm:slowdecay} and Theorem \ref{thm:compact}}
Let $\psi$ and $\chi$ be functions satisfying the assumptions \eqref{eq:taudef}, \eqref{eq:sigmadef}, \eqref{eq:chicondition}, and \eqref{eq:chigrowth}. Recall that in the case of Theorem \ref{thm:compact} that we take $\chi \equiv 1$, and we showed in Remark \ref{rmk:compacthmconditions} that $\psi$ satisfies assumptions \eqref{eq:taudef} and \eqref{eq:sigmadef}.
We begin by constructing a sequence of functions $(\mu_{\chi, \omega, k})_{k \in \mathbb{N}}$ where $\mu_{\chi, \omega, k}(x)$ is the product 

\[\mu_{\chi, \omega, k}(x) = \prod_{i=1}^{k} g_i(x).\]

For each $g_i$ we choose an associated $M_i$ such that the estimates in Lemma \ref{lemma: gkestimate2.1} apply. We further assume that the $M_{i}$'s are spaced sufficiently far apart to satisfy the conditions of Lemma \ref{lem:convolutionstability}. In particular, this implies that for each $i \geq 1$ we have
\begin{equation}\label{eq:Mkcondition}
M_{i+1} \geq \psi(\beta(M_i))^{-2}.
\end{equation}

Taking the Fourier transform of this sequence, we get the sequence $(\hat \mu_{\chi, \omega, k})_{k \in \mathbb{N}}$ where 
\[\hat \mu_{\chi, \omega, k} (s) = \hat g_1 * \cdots * \hat g_k (s).\] 

With this sequence of functions defined, the next objective is to show that the sequence is uniformly convergent and that the functions $\hat \mu_{\chi, \omega, i}$ satisfy a similar decay estimate (up to a constant) for all $i$. We begin with the latter:
\begin{lemma} \label{lemma: stability2.1}
For the sequence of functions $(\hat \mu_{\chi, \omega, k})_{k \in \mathbb{N}}$ defined above, we have the following statements for any integers $k, l$ with $k > l$:

\begin{IEEEeqnarray} {rCll}
|\hat \mu_{\chi, \omega, k}(0)| & \leq & \text{ } 2 \quad & \label{eq:mu_k 0 2.1}\\ 
|\hat \mu_{\chi, \omega, l}(s) -\hat \mu_{\chi, \omega, k}(s)| & \lesssim & \sum_{j = l +1}^{k} M_{j}^{-99} \quad & \text{when } 0 \leq |s| < M_l/4\label{eq:mu_k uniform small 2.1}\\
|\hat \mu_{\chi, \omega, k}(s)| & \lesssim & \text{ } \theta(|s|) \omega(|s|) \quad &  \text{for all $s \neq 0$} \label{eq:mu_k small2.1}\\
|\hat \mu_{\chi, \omega, k}(s)| & \lesssim & \,  \exp \left(-\frac{1}{2} (\psi(\beta(M_k))^{2}|s|)^{\frac{\sigma + 1}{4 \sigma}} \right) \quad & \text{if $|s| \geq \psi(\beta(M_k))^{-2}$.} \label{eq:mu_k large 2.1}
\end{IEEEeqnarray}
Note that since $\mu$ is a positive measure, \eqref{eq:mu_k 0 2.1} implies that $|\hat \mu_{\chi, \omega, k}(s)| \leq 2$ for all $s$.
\end{lemma}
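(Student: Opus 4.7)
The plan is to prove all four estimates \eqref{eq:mu_k 0 2.1}--\eqref{eq:mu_k large 2.1} simultaneously by induction on $k$, with the inductive step carried out by applying the Convolution Stability Lemma (Lemma \ref{lem:convolutionstability}) to the pair $H = \hat\mu_{\chi,\omega,k-1}$, $G = \hat g_k$, so that $H \ast G = \hat\mu_{\chi,\omega,k}$. The scale parameters are taken to be $N_1$ comparable to $\psi(\beta(M_{k-1}))^{-2}/8$, $N_2 = M_k$, and $N_3$ comparable to $\psi(\beta(M_k))^{-2}$; the hypothesis \eqref{eq:Mkcondition} on the growth of the $M_k$ guarantees both $M_k \geq 8 N_1$ and $N_3 > \psi(\beta(N_2))^{-2}$, which is the required scale separation.

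The base case $k=1$ is immediate from Lemma \ref{lemma: gkestimate2.1}, since $\hat\mu_{\chi,\omega,1} = \hat g_1$, $\omega \geq 1$, and \eqref{eq:mu_k uniform small 2.1} is vacuous when the sum is empty. For the inductive step, the hypotheses of Lemma \ref{lem:convolutionstability} on $G = \hat g_k$ are exactly the estimates of Lemma \ref{lemma: gkestimate2.1}; the bound $|G|\leq 1$ follows from $|\hat g_k(s)| \leq \hat g_k(0) = 1$ since $g_k \geq 0$. The hypotheses on $H = \hat\mu_{\chi,\omega,k-1}$ are exactly the inductive versions of \eqref{eq:mu_k 0 2.1}, \eqref{eq:mu_k small2.1}, and \eqref{eq:mu_k large 2.1}; the global bound $|H|\leq 2$ uses positivity of $\mu_{\chi,\omega,k-1}$ as in the remark following the lemma statement, so that $|\hat\mu_{\chi,\omega,k-1}(s)| \leq \hat\mu_{\chi,\omega,k-1}(0) \leq 2$.

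The three conclusions of Lemma \ref{lem:convolutionstability} then yield the inductive step. Conclusion \eqref{eq:convolve2} gives \eqref{eq:mu_k small2.1} at level $k$, while \eqref{eq:convolve3} gives \eqref{eq:mu_k large 2.1} on the range $|s| \geq 8 N_3$; the intermediate range $\psi(\beta(M_k))^{-2} \leq |s| < 8 N_3$ is already controlled more strongly by the freshly-proved \eqref{eq:mu_k small2.1}, since the target exponential bound is of unit order on that dyadic window while $\theta(|s|)\omega(|s|)$ is vanishing. For \eqref{eq:mu_k uniform small 2.1}, the short-range estimate \eqref{eq:convolve1} gives $|\hat\mu_{\chi,\omega,k}(s) - \hat\mu_{\chi,\omega,k-1}(s)| \lesssim M_k^{-99}$ for $|s| < M_k/4$; since $M_j \geq M_l$ for $j \geq l$, one may combine this one-step bound with the inductive version for the pair $(l, k-1)$ via the triangle inequality to obtain the $(l,k)$ version. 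Specializing the same telescoping to $s=0$, together with $\hat g_1(0) = 1$, yields $|\hat\mu_{\chi,\omega,k}(0)| \leq 1 + C\sum_{j=2}^{k} M_j^{-99} \leq 2$, provided the $M_j$ grow fast enough for the tail sum to be absorbed.

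The main subtlety is the diagonal choice of the sequence $(M_k)$: each $M_k$ must be chosen large enough (depending on $M_1,\ldots,M_{k-1}$, on $\omega$, and on the hidden constants in Lemma \ref{lem:convolutionstability}) to meet the separation condition \eqref{eq:Mkcondition}, the "$N_2$ sufficiently large depending on $\omega$ and $N_1$" requirement of Lemma \ref{lem:convolutionstability}, and to ensure that $\sum_{j\geq 2} M_j^{-99}$ is small enough that the bound $|\hat\mu_{\chi,\omega,k}(0)| \leq 2$ is preserved uniformly in $k$. Since each of these is a condition of the form "$M_k$ larger than an effectively computable threshold," the sequence can be produced inductively, and no quantitative obstruction arises.
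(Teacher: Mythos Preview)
Your proposal is correct and follows essentially the same route as the paper: induction on $k$, with the inductive step carried out by applying Lemma \ref{lem:convolutionstability} to $H=\hat\mu_{\chi,\omega,k-1}$ and $G=\hat g_k$ with the scale choices $N_1\sim\psi(\beta(M_{k-1}))^{-2}$, $N_2=M_k$, $N_3\sim\psi(\beta(M_k))^{-2}$, then telescoping for \eqref{eq:mu_k uniform small 2.1} and \eqref{eq:mu_k 0 2.1}. Your treatment is in fact slightly more careful than the paper's in two places: you start the induction at $k=1$ rather than $k=2$, and you explicitly address the dyadic window $\psi(\beta(M_k))^{-2}\le |s|<8N_3$ (where the conclusion \eqref{eq:convolve3} does not directly apply) by observing that the target exponential is of unit size there and hence absorbed by the already-established bound \eqref{eq:mu_k small2.1}.
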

\begin{proof}
We prove Lemma \ref{lemma: stability2.1} by induction and repeated application of Lemma \ref{lem:convolutionstability}. We begin with the basis by letting $k = 2$. Then $\hat \mu_2 = \hat g_1 * \hat g_2$. Apply Lemma \ref{lem:convolutionstability} with $H = \hat g_1$, $G= \hat g_2$, $N_1 = \psi(\beta(M_1))^{-2}$, $N_2 = M_2$ and $N_3 = \psi(\beta(M_2))^{-2}$. Then the estimates \eqref{eq:mu_k uniform small 2.1}, \eqref{eq:mu_k small2.1} and  \eqref{eq:mu_k large 2.1} immediately follow from \eqref{eq:convolve1}, \eqref{eq:convolve2} and \eqref{eq:convolve3}, respectively. The statement \eqref{eq:mu_k 0 2.1} can be shown by the following calculation:
\begin{IEEEeqnarray*} {rCl}
|\hat \mu_{\chi, \omega, 1}(0)| & \leq & |\hat g_1(0) - \hat g_1* \hat g_2 (0)| + |\hat g_1(0)|\\
& \leq & \mathcal{O}(M_2^{-99}) + 1\\
& \leq & 2
\end{IEEEeqnarray*}
where the last inequality holds for the choice of a sufficiently large $M_2$. Now assume that Lemma \ref{lemma: stability2.1} holds for $k$ up to $n-1$. Then for the case $k = n$, we make the choice $H = \hat \mu_{n-1} = \hat g_1*\cdots*\hat g_{n-1}$, $G= \hat g_n$, $N_1 = \psi(\beta(M_{n-1}))^{-2}$, $N_2 = M_n$ and $N_3 = \psi(\beta(M_n))^{-2}$. From the induction hypothesis and Lemma \ref{lem:convolutionstability}, the estimates \eqref{eq:mu_k small2.1} and \eqref{eq:mu_k large 2.1} immediately follow. For estimate \eqref{eq:mu_k uniform small 2.1}, assume $l < k$ and assume $|s| \leq M_l/4$. Then the triangle inequality gives:
\[|\hat \mu_{\chi, \omega, l}(s) -\hat \mu_{\chi, \omega, k}(s)| \leq |\hat \mu_{\chi, \omega, l}(s) - \hat \mu_{\chi, \omega, l+1}(s)| + \cdots + |\hat \mu_{\chi, \omega, k-1}(s) - \hat \mu_{\chi, \omega, k}(s)|.\]

By the induction hypothesis, $|\hat \mu_{i}(s) - \hat \mu_{i+1}(s)| \lesssim M_{i+1}^{-99}$ for $l \leq i \leq k-2$ and Lemma \ref{lem:convolutionstability} gives
\[|\hat \mu_{\chi, \omega, k-1}(s) - \hat \mu_{\chi, \omega, k}(s)|  \lesssim M_{k}^{-99}.\] 
Consequently,
\[|\hat \mu_{\chi, \omega, l}(s) -\hat \mu_{\chi, \omega, k}(s)| \lesssim \sum_{j = l+1}^{k} M_{j}^{-99}.\]

Finally, from the calculation
\begin{IEEEeqnarray*} {rCl}
|\hat \mu_{\chi, \omega, k}(0)| & \leq & |\hat \mu_{\chi, \omega, 1} (0) - \hat \mu_{\chi, \omega, k} (0)| + |\hat \mu_{\chi, \omega, 1} (0)|\\
& \leq & 1 +\mathcal{O}\left(\sum_{j = 2}^{k} M_{j}^{-99}\right)\\
& \leq & 2,
\end{IEEEeqnarray*}
the estimate \eqref{eq:mu_k 0 2.1} is proved. 
\end{proof}
Turning now to proving the uniform convergence of the sequence $(\hat \mu_{\chi, \omega, k})_{k \in \mathbb{N}}$, we have the following lemma.
\begin{lemma}\label{lem:uniformconvergenceslowdecay}
The sequence $(\hat \mu_{\chi, \omega, k})_{k \in \mathbb{N}}$ converges uniformly for all $s \in \mathbb{Z}$ to some function $M(s)$. This function $M(s)$ has the property that 
\begin{equation}\label{eq:weaklimit}
|M(s)| \lesssim \theta(|s|) \omega(|s|); \text{ }s \in \mathbb{Z}
\end{equation}
\end{lemma}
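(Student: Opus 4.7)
The plan is to show that $(\hat \mu_{\chi,\omega,k})_{k \in \mathbb{N}}$ is uniformly Cauchy on $\mathbb{Z}$ by partitioning the index set according to the size of $|s|$ relative to $M_l$, and applying two different bounds from Lemma \ref{lemma: stability2.1}. I fix $k > l$ and handle each regime separately.

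In the regime $|s| < M_l/4$, estimate \eqref{eq:mu_k uniform small 2.1} directly controls the difference:
\[|\hat \mu_{\chi,\omega,l}(s) - \hat \mu_{\chi,\omega,k}(s)| \lesssim \sum_{j = l+1}^{k} M_j^{-99}.\]
Because the scales satisfy \eqref{eq:Mkcondition}, and since $\psi(q) \approx q^{-\tau}$ forces $\psi(\beta(M_j))^{-2}$ to grow at least polynomially (in fact superpolynomially) in $M_j$, I may select the $M_j$ to grow at least geometrically. Then $\sum_{j > l} M_j^{-99}$ is an absolutely convergent tail that tends to $0$ as $l \to \infty$, uniformly in $s$ in this regime.

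In the regime $|s| \geq M_l/4$, I apply \eqref{eq:mu_k small2.1} separately to $\hat \mu_{\chi,\omega,l}$ and $\hat \mu_{\chi,\omega,k}$ and invoke the triangle inequality to conclude
\[|\hat \mu_{\chi,\omega,l}(s) - \hat \mu_{\chi,\omega,k}(s)| \lesssim \theta(|s|) \omega(|s|).\]
By the preliminary lemma, $\theta(|s|) \approx |s|^{-1/\tau}$, and the hypothesis $\omega(|s|) \leq \log |s|$ in Lemma \ref{lem:convolutionstability} forces $\theta(|s|) \omega(|s|) \to 0$ as $|s| \to \infty$. Consequently the supremum $\sup_{|s| \geq M_l/4} \theta(|s|) \omega(|s|)$ tends to $0$ as $l \to \infty$, which gives the required smallness uniformly in this regime as well.

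Combining the two regimes yields uniform convergence on all of $\mathbb{Z}$ to some function $M(s)$; the bound \eqref{eq:weaklimit} then follows by passing to the limit $k \to \infty$ in \eqref{eq:mu_k small2.1}. The main obstacle is the large-$|s|$ region: the only tool there is the uniform pointwise bound $\theta \omega$ on each term, so one must verify that this bound itself already becomes small once $|s|$ is forced to be large, which is precisely where the polynomial decay of $\theta$ must dominate the at-most-logarithmic growth of $\omega$. Everything else is bookkeeping built on Lemma \ref{lemma: stability2.1}.
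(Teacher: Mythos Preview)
Your argument is correct and matches the paper's proof essentially line for line: both split $\mathbb{Z}$ into the region $|s| < M_l/4$ (handled by \eqref{eq:mu_k uniform small 2.1}) and the region $|s| \geq M_l/4$ (handled by the pointwise bound \eqref{eq:mu_k small2.1} together with the fact that $\theta(|s|)\omega(|s|) \to 0$). Your derivation of \eqref{eq:weaklimit} by simply passing to the limit in \eqref{eq:mu_k small2.1} is in fact slightly cleaner than the paper's, which instead fixes the index $k$ with $M_k/4 \leq |s| \leq M_{k+1}/4$ and controls $|M(s) - \hat\mu_{\chi,\omega,k}(s)|$ via the tail sum; both are valid since the implicit constant in \eqref{eq:mu_k small2.1} is uniform in $k$.
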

\begin{proof} Let $\epsilon > 0$. There exists an $m_0$, depending on $\epsilon$ and $\omega$, sufficiently large such that 
\[\theta(|s|) \omega(|s|) < \epsilon/2C\]
when $|s| \geq M_{m_0}/4$. Here $C$ is taken to be the implicit constant for estimate \eqref{eq:mu_k small2.1}. Then for $m\geq n\geq m_0$ 
\begin{IEEEeqnarray*} {rCl}
 |\hat \mu_{\chi, \omega, m}(s) - \hat \mu_{\chi, \omega, n}(s)| & \leq & |\hat \mu_{\chi, \omega, m} (s)| + |\hat \mu_{\chi, \omega, n} (s)|\\
 & < & \epsilon.
\end{IEEEeqnarray*}
When $0 \leq |s| \leq M_{m_0}/4$, applying the estimate \eqref{eq:mu_k uniform small 2.1} gives
\[ |\hat \mu_{\chi, \omega, m}(s) - \hat \mu_{\chi, \omega, n}(s)|  \leq  \sum_{j = n}^{\infty} M_{j}^{-99}\]
and the sum may be made to be less than $\epsilon$.

Hence the sequence $\hat \mu_{\chi, \omega, n}$ has a uniform limit $M(s)$. An upper bound on $|M(s)|$ will follow from Lemma \ref{lemma: stability2.1}. Suppose $|s|$ is such that $\frac{M_k}{4} \leq |s| \leq \frac{M_{k+1}}{4}$.

Then the estimate \eqref{eq:mu_k small2.1} gives that
\[|\hat \mu_{\chi, \omega, k}(s) | \lesssim \text{ } \theta(|s|) \omega(|s|),\]
and \eqref{eq:mu_k uniform small 2.1} and the triangle inequality give
\begin{IEEEeqnarray*}{rCl}
|M(s)| & \leq & |\hat \mu_{\chi, \omega, k}(s)| + \limsup_{l \geq k} |\hat \mu_{\chi, \omega, k}(s) - \hat \mu_{\chi, \omega, l}(s)| \\
& \lesssim & \theta(|s|) \omega(|s|) + \sum_{j=k+1}^{\infty} M_j^{-99} \\
& \lesssim & \theta(|s|) \omega(|s|) + |s|^{-99} \\
& \lesssim & \theta(|s|) \omega(|s|),
\end{IEEEeqnarray*}
as desired.
\end{proof}
In order to show that the sequence $\mu_{\chi, \omega, n}$ converges to a weak limit $\mu$ using the convergence of the $\hat \mu_{\chi, \omega, n}(s)$, it is normal to appeal to a theorem such as the L\'evy continuity theorem. However, this is slightly inconvenient as we only have estimates for $\hat \mu_{\chi, \omega, n}(s)$ at integer values $s$. We will provide a proof of the weak convergence below. First, we will need the following technical lemma relating the Fourier series of a measure supported on the interval $[0,1]$ to its Fourier transform. A stronger version of this lemma can be found as Lemma 1 of Chapter 17 in the book of  Kahane \cite{Kahane85}.
\begin{lemma}\label{lem:kahane}
Suppose that $\mu$ is a measure supported on the interval $[0,1]$ satisfying an estimate of the form
\[|\hat \mu(s)| \lesssim N(|s|) \quad \text{for all $s \in \mathbb{Z}$}\]
where $N: \mathbb{R}^+ \to \mathbb{R}^+$ is a non-increasing function satisfying the doubling property
\begin{equation}\label{eq:doubling}
N(\xi/2) \lesssim N(\xi) \quad \text{for all $\xi \in \mathbb{R}^+$}.
\end{equation}
Then $|\hat \mu(\xi)| \lesssim N(|\xi|)$ for all $\xi \in \mathbb{R}$.
\end{lemma}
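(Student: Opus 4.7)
The plan is to express $\hat\mu(\xi)$ at an arbitrary real $\xi$ as a discrete convolution of its integer samples $\hat\mu(n)$ against a Schwartz kernel, and then transfer the integer bound $|\hat\mu(n)| \lesssim N(|n|)$ to an everywhere bound using only the doubling property of $N$. The key identity comes from Poisson summation. Periodize $\mu$ to $\tilde\mu := \sum_{k \in \mathbb{Z}} \tau_k \mu$, a $1$-periodic tempered distribution whose Fourier transform is $\widehat{\tilde\mu} = \sum_{n \in \mathbb{Z}} \hat\mu(n)\,\delta_n$. After a translation (which leaves $|\hat\mu|$ unchanged) arranging $\supp\mu$ to sit in the strict interior of $(0,1)$, pick a smooth cutoff $\psi \in C_c^\infty(\mathbb{R})$ with $\psi \equiv 1$ on $\supp\mu$ and $\supp\psi$ so close to $[0,1]$ that it avoids every nontrivial integer translate of $\supp\mu$. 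Then $\mu = \psi \tilde\mu$, and taking Fourier transforms yields the reconstruction formula
\[
\hat\mu(\xi) \;=\; \hat\psi * \widehat{\tilde\mu}(\xi) \;=\; \sum_{n \in \mathbb{Z}} \hat\mu(n)\,\hat\psi(\xi - n).
\]
Because $\psi$ is smooth and compactly supported, $\hat\psi$ is Schwartz, so $|\hat\psi(\eta)| \lesssim_M (1+|\eta|)^{-M}$ for every $M$.

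Inserting the hypothesis $|\hat\mu(n)| \lesssim N(|n|)$ into this formula gives
\[
|\hat\mu(\xi)| \;\lesssim\; \sum_{n \in \mathbb{Z}} \frac{N(|n|)}{(1+|\xi-n|)^M},
\]
and I will split the right-hand side at $|\xi - n| = |\xi|/2$. In the ``close'' range $|\xi - n| \leq |\xi|/2$ one has $|n| \geq |\xi|/2$, and iterating the doubling inequality $N(\xi/2) \lesssim N(\xi)$ a bounded number of times gives $N(|n|) \lesssim N(|\xi|)$; since $\sum_{n} (1+|\xi-n|)^{-M}$ is bounded by an absolute constant for $M > 1$, this range contributes $\lesssim N(|\xi|)$. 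In the ``far'' range $|\xi - n| > |\xi|/2$, the weight satisfies $(1+|\xi-n|)^{-M} \lesssim |\xi|^{-M}$; using the trivial bound $N(|n|) \leq N(0)$ and the fact that at most $O(|\xi|)$ integers $n$ contribute nontrivially, the far range contributes $\lesssim N(0)|\xi|^{1-M}$. The doubling hypothesis forces a polynomial lower bound $N(|\xi|) \gtrsim |\xi|^{-K}$ with $K$ depending on the doubling constant, so choosing $M > K+1$ renders the far contribution negligible compared to $N(|\xi|)$ and closes the estimate.

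The main obstacle is the reduction arranging $\supp\mu$ strictly inside $(0,1)$: if $\supp\mu$ touches an endpoint, no smooth compactly supported $\psi$ can simultaneously equal $1$ on $\supp\mu$ and avoid the nearest integer translate of $\supp\mu$. This will be handled by a limiting argument---mollify $\mu$ and rescale to sit strictly inside $(0,1)$, apply the above argument with constants uniform in the margin, then pass to the limit using that the integer bound $|\hat\mu(s)| \lesssim N(|s|)$ is stable under these operations.
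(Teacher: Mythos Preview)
The paper does not actually prove this lemma; it is stated with a pointer to Lemma~1 of Chapter~17 in Kahane's book \cite{Kahane85}. Your overall strategy---obtain a sampling identity $\hat\mu(\xi)=\sum_{n}\hat\mu(n)\,\hat\psi(\xi-n)$ via periodization and a smooth cutoff, then split the sum at $|\xi-n|=|\xi|/2$, using monotonicity plus doubling in the near range and the polynomial lower bound $N(|\xi|)\gtrsim|\xi|^{-K}$ forced by doubling in the far range---is sound and is essentially the standard argument \emph{when $\supp\mu$ lies in a strict subinterval of $(0,1)$}.

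The gap is in your endpoint reduction. A translation cannot move a support of length $1$ strictly inside $(0,1)$, and the ``mollify and rescale, then pass to the limit with uniform constants'' fix does not work. Mollifying $\mu$ by a bump of width $\delta$ \emph{enlarges} the support to $[-\delta,1+\delta]$, making the separation from the integer translates worse, not better. Rescaling $\mu$ to $T_*\mu$ with $T(x)=a+bx$, $0<b<1$, does shrink the support, but then $\widehat{T_*\mu}(n)=e^{-2\pi i a n}\hat\mu(bn)$, which samples $\hat\mu$ at the \emph{non-integers} $bn$---precisely the values you have no control over and are trying to bound. So the integer hypothesis is not ``stable under these operations'' as you claim. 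If instead you keep $\mu$ fixed and use an $\epsilon$-dependent cutoff $\psi_\epsilon$ squeezed into a window of width $\epsilon$ at each endpoint, the Schwartz constants of $\hat\psi_\epsilon$ blow up like $\epsilon^{-M}$, so the bounds are not uniform in $\epsilon$ either. In short, the sampling formula with a rapidly decaying kernel is only available after you already know the support has length strictly less than $1$, and none of the proposed limiting procedures produces such a measure while preserving the integer-coefficient bound; a correct proof (as in Kahane) must handle the critical-length case by a different device.
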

We have already seen that $\theta(\xi) \omega(\xi)$ is a doubling function for $\xi > 0$. Thus we can apply Lemma \ref{lem:kahane}.
\begin{lemma}\label{lem:weaklimit}
The sequence of measures $\mu_{\chi, \omega, k}$ has a weak limit $\mu_{\chi, \omega}$. This weak limit $\mu_{\chi, \omega}$ satisfies the estimate
\begin{equation}\label{eq:muhatdecayreal}
\hat \mu_{\chi, \omega}(\xi) \lesssim \theta (|\xi|) \omega(|\xi|)
\end{equation}
for all real numbers $\xi$.
\end{lemma}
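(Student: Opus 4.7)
The plan is to extract a weak limit of the sequence $(\mu_{\chi,\omega,k})_{k \in \mathbb{N}}$ from the integer-frequency convergence given by Lemma \ref{lem:uniformconvergenceslowdecay}, and then upgrade the Fourier decay bound from $\mathbb{Z}$ to $\mathbb{R}$ via Lemma \ref{lem:kahane}. The obstacle flagged in the paragraph preceding the lemma is that L\'evy's continuity theorem wants pointwise convergence of $\hat\mu_{\chi,\omega,k}(\xi)$ at every real $\xi$, not just at integers. I would circumvent this by first upgrading the integer-frequency convergence to uniform convergence on compact subsets of $\mathbb{R}$, through an equicontinuity argument.

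For the upgrade, note that each $\mu_{\chi,\omega,k}$ is a positive measure supported on $[0,1]$ with $\|\mu_{\chi,\omega,k}\| = \hat\mu_{\chi,\omega,k}(0) \leq 2$ by \eqref{eq:mu_k 0 2.1}. Differentiating the identity $\hat\mu_{\chi,\omega,k}(\xi) = \int_0^1 e^{-2\pi i \xi x}\, d\mu_{\chi,\omega,k}(x)$ under the integral sign yields the bound $|(\hat\mu_{\chi,\omega,k})'(\xi)| \leq 2\pi\|\mu_{\chi,\omega,k}\| \leq 4\pi$ uniformly in $k$, so the $\hat\mu_{\chi,\omega,k}$ form a uniformly Lipschitz family on $\mathbb{R}$. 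For any real $\xi$, letting $n = n(\xi)$ be the nearest integer, two applications of the Lipschitz bound combined with the uniform convergence of $\hat\mu_{\chi,\omega,k}(n)$ at integers (from Lemma \ref{lem:uniformconvergenceslowdecay}) show that $(\hat\mu_{\chi,\omega,k}(\xi))_k$ is uniformly Cauchy on compact subsets of $\mathbb{R}$. Denote the resulting continuous limit by $\widetilde M(\xi)$; it extends $M$ from $\mathbb{Z}$ to $\mathbb{R}$ and is itself Lipschitz, hence continuous at $0$.

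With pointwise convergence of $\hat\mu_{\chi,\omega,k}$ to $\widetilde M$ on $\mathbb{R}$ established and tightness automatic from the common support in $[0,1]$, L\'evy's continuity theorem delivers a finite positive Borel measure $\mu_{\chi,\omega}$ on $[0,1]$ with $\mu_{\chi,\omega,k}\to\mu_{\chi,\omega}$ weakly and $\hat\mu_{\chi,\omega} = \widetilde M$ on $\mathbb{R}$. The integer-frequency bound $|\hat\mu_{\chi,\omega}(s)| \lesssim \theta(|s|)\omega(|s|)$ follows directly from \eqref{eq:weaklimit}, and I would then invoke Lemma \ref{lem:kahane} with $N(\xi) = \theta(\xi)\omega(\xi)$, which is doubling as noted just before the lemma statement, to conclude \eqref{eq:muhatdecayreal} for all real $\xi$. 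I expect the main obstacle to be the Lipschitz-upgrade step: once uniform convergence on compacts is in hand, L\'evy's theorem and Kahane's lemma apply almost off the shelf. The only minor technical wrinkle is that the product $\theta\omega$ need not be strictly non-increasing (even though it is doubling), in which case one can first pass to the decreasing, doubling majorant furnished by Lemma \ref{lem:doublingmajorant}, which is still bounded above by a constant times $\theta(\xi)\omega(\xi)$ by the same averaging argument used in the single-factor estimates.
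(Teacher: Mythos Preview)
Your Lipschitz-upgrade step does not work as written. Uniform Lipschitz continuity together with convergence on $\mathbb{Z}$ is \emph{not} enough to force convergence at non-integer frequencies, because $\mathbb{Z}$ is not dense in $\mathbb{R}$. A concrete counterexample within the class of measures on $[0,1]$: let $\mu_k$ alternate between $\delta_0$ and $\delta_1$. Then $\hat\mu_k(n)=1$ for every $n\in\mathbb{Z}$ and every $k$, and each $\hat\mu_k$ is $2\pi$-Lipschitz, yet $\hat\mu_k(1/2)$ alternates between $1$ and $-1$. In your three-term estimate
\[
|\hat\mu_k(\xi)-\hat\mu_l(\xi)|\le |\hat\mu_k(\xi)-\hat\mu_k(n)|+|\hat\mu_k(n)-\hat\mu_l(n)|+|\hat\mu_l(n)-\hat\mu_l(\xi)|,
\]
the outer terms are bounded only by $4\pi|\xi-n|\le 2\pi$ and there is no mechanism making them small or forcing cancellation. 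Equicontinuity alone (Arzel\`a--Ascoli) gives sub-subsequential limits on compacts, but to show they all coincide you still need to know that a measure on $[0,1]$ is determined by its integer Fourier coefficients---and once you invoke that, you are essentially running the paper's argument.

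The paper avoids L\'evy altogether: it extracts a weak-$*$ convergent subsequence by Banach--Alaoglu, observes that any subsequential limit $\nu$ is supported on $[0,1]$ with $\hat\nu(n)=M(n)$ for all $n\in\mathbb{Z}$, and then uses the uniqueness of Fourier--Stieltjes series on $[0,1]$ to conclude that all subsequential limits agree; a standard subsequence criterion (Billingsley) upgrades this to weak convergence of the full sequence. From there both you and the paper finish identically via Lemma~\ref{lem:kahane}. (Your final remark about passing through Lemma~\ref{lem:doublingmajorant} is also off: the averaged majorant $N$ produced there need not be $\lesssim\theta\omega$. The clean fix, if one worries about monotonicity, is to replace $\theta\omega$ by $\xi\mapsto\sup_{t\ge\xi}\theta(t)\omega(t)$, which is non-increasing and inherits the doubling property.)
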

\begin{proof}
Observe that each measure $\mu_{\chi, \omega, n}$ has total variation norm bounded by $2$. We claim that the measures $\mu_{\chi, \omega, k}$ have a weak limit. First, by the Banach-Alaoglu theorem, there exists a subsequence $\mu_{\chi, \omega, n_k}$ that has a weak limit $\mu_{\chi, \omega}$.  Since each measure $\mu_{\chi, \omega, n_k}$ is supported in $[0,1]$, the weak limit $\mu_{\chi, \omega}$ is supported in $[0,1]$. 

In particular, since $\mu$ is supported in $[0,1]$, each Fourier coefficient $\hat \mu(s)$ of $\mu$ is obtained by integrating against a continuous, compactly supported function. Hence, for each $s \in \mathbb{Z}$, $\lim_{k \to \infty} \hat \mu_{\chi, \omega, n_k}(s) = M(s)$, where $M(s)$ is the limit in Lemma \ref{lemma: stability2.1}. 

By the corollary to Theorem 25.10 of Billingsley \cite{Billingsley12}, it is enough to check that each weakly convergent subsequence of $\{\mu_{\chi, \omega, n}\}$ converges weakly to $\mu_{\chi, \omega}$. Suppose $\{\nu_{\chi, \omega, n}\}$ is a subsequence of the $\mu_{\chi, \omega, n}$ with some weak limit $\nu$. Then $\nu$ is supported on $[0,1]$, so by the same argument as in the previous paragraph, Lemma \ref{lemma: stability2.1} implies that $\hat \nu(s) = M(s)$ for every $s \in \mathbb{Z}$. Since a measure supported on $[0,1]$ is uniquely determined by its Fourier-Stieltjes series, it follows that $\nu = \mu_{\chi, \omega}$ as desired.

Finally, we verify that $\hat \mu_{\chi, \omega}(\xi)$ satisfies the estimate \eqref{eq:muhatdecayreal}. This estimate holds for integer values of $s$ by the estimate \eqref{eq:weaklimit}. Hence, Lemma \ref{lem:kahane} shows that $\hat \mu_{\chi, \omega}(\xi)$ satisfies the same estimate for $\xi \in \mathbb{R}$.
\end{proof}
Hence the measures $\mu_{\chi, \omega, k}$ have a weak limit supported on $[0,1]$. We now verify that this weak limit is indeed supported on the set $E(\psi)$.
\begin{lemma}\label{lem:support}
Let $\mu$ be as in Lemma \ref{lem:weaklimit}. Then $\mu$ is supported on $E(\psi)$.
\end{lemma}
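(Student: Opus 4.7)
The plan is to identify the support of the weak limit $\mu = \mu_{\chi,\omega}$ as being contained in $\bigcap_{k \geq 1} \supp g_k$, and then observe that this intersection lies inside $E(\psi)$ by unwinding the construction of each $g_k$. The only nontrivial step is the first inclusion, because weak convergence does not in general imply support containment; what saves us is that the supports of the approximants $\mu_{\chi,\omega,k}$ form a \emph{decreasing} sequence of closed sets, which lets us invoke the open-set half of the portmanteau theorem. The rest is bookkeeping.

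For the first step, since each $\mu_{\chi,\omega,k}$ has density equal to the nonnegative product $\prod_{i=1}^k g_i$, it is supported on $K_k := \bigcap_{i=1}^k \supp g_i$, and the closed sets satisfy $K_1 \supseteq K_2 \supseteq \cdots$. Set $K = \bigcap_{k \geq 1} K_k = \bigcap_{k \geq 1} \supp g_k$. If $x \notin K$, then $x \notin K_{k_0}$ for some $k_0$, and since $K_{k_0}$ is closed there is an open neighborhood $U$ of $x$ disjoint from $K_{k_0}$. Then $\mu_{\chi,\omega,k}(U) = 0$ for every $k \geq k_0$, so the portmanteau theorem applied to the open set $U$ yields $\mu(U) \leq \liminf_k \mu_{\chi,\omega,k}(U) = 0$, which forces $x \notin \supp \mu$. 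Hence $\supp \mu \subseteq K$.

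For the second step, recall from Section \ref{subsec:singlescaleslowdecay} that $g_k = C_k^{-1} \sum_{q,p} \phi_{p,q}$, where the sum is over primes $q \in [M_k, \beta(M_k))$ and integers $1 \leq p \leq q$, and $\phi$ is supported in $[-1/2,0]$. Therefore each $\phi_{p,q}$ is supported in the interval $[p/q - \psi(q)/2,\, p/q]$, so $\supp g_k$ is contained in the union of these intervals over admissible $(p,q)$. Given $x \in \supp \mu \subseteq \bigcap_k \supp g_k$, for every $k$ we obtain a prime denominator $q_k \in [M_k, \beta(M_k))$ and a numerator $p_k$ with $|x - p_k/q_k| \leq \psi(q_k)/2 \leq \psi(q_k)$. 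Since the sequence $(M_k)$ increases to infinity, so do the denominators $q_k$, so the pairs $(p_k, q_k)$ are infinitely many and distinct. This places $x$ in $E(\psi)$, completing the proof.
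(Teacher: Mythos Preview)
Your proof is correct and follows essentially the same route as the paper: identify $\supp g_k$ as a union of short intervals around rationals $p/q$, pass to the nested intersection, and conclude that any point in $\supp\mu$ admits infinitely many good rational approximations. The only difference is that you make the weak-limit step explicit via the portmanteau theorem, whereas the paper simply asserts the containment $\supp\mu_{\chi,\omega}\subset\bigcap_i\bigcup_{q,p}[p/q-\tfrac12\psi(q),\,p/q+\tfrac12\psi(q)]$ without further justification; your version is slightly more careful here but not a different argument.
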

\begin{proof}
It is easy to see that
\[\supp \phi_{p,q} \subset \left[\frac{p}{q} - \frac{1}{2}\psi(q), \frac{p}{q} + \frac{1}{2}\psi(q)\right]\]
and therefore
\[\supp g_k \subset  \bigcup_{\substack{M_{k} \leq q \leq \beta(M_{k}) \\ q \text{ prime}}} \bigcup_{p = 0}^{q-1} \left[\frac{p}{q} - \frac{1}{2}\psi(q), \frac{p}{q} + \frac{1}{2}\psi(q)\right].\]
Since each $\mu_{\chi, \omega, k}$ is the product of $g_i$'s its support is an intersection of these supports.
\[\supp \mu_{\chi, \omega, k} \subset \bigcap_{i = 1}^{k}\bigcup_{\substack{M_{i} \leq q \leq \beta(M_{i}) \\ q \text{ prime}}} \bigcup_{p = 0}^{q-1} \left[\frac{p}{q} - \frac{1}{2}\psi(q), \frac{p}{q} + \frac{1}{2}\psi(q)\right].\]
Because the measure $\mu_{\chi, \omega}$ is defined as the weak limit of the measures $\mu_{\chi, \omega, k}$, we have the containment
\[\supp \mu_{\chi, \omega} \subset \bigcap_{i = 1}^{\infty}\bigcup_{\substack{M_{i} \leq q \leq \beta(M_{i}) \\ q \text{ prime}}} \bigcup_{p = 0}^{q-1} \left[\frac{p}{q} - \frac{1}{2}\psi(q), \frac{p}{q} + \frac{1}{2}\psi(q)\right].\]
Observe that if $x \in \supp \mu_{\chi, \omega}$ and $k \in \mathbb{N}$, then $x$ must also lie in one of the intervals 
\[\left[\frac{p}{q} - \frac{1}{2}\psi(q), \frac{p}{q} + \frac{1}{2}\psi(q)\right]\]
for some $M_k \leq q \leq \beta(M_k).$

Therefore, there exists an infinite number of rational numbers $\frac{p}{q}$ which satisfy
\[\left| x-\frac{p}{q}\right| \leq \psi (q)\]
and we may conclude that $\supp \mu \subset E(\psi)$.
\end{proof}
The measure $\mu_{\chi, \omega}$ satisfies all of the properties required to prove Theorem \ref{thm:slowdecay}. Hence, the proof of Theorem \ref{thm:slowdecay} is complete.

To show Theorem \ref{thm:compact}, it is also necessary to verify that the support of $\mu$ is contained in a set of generalized $\alpha$-Hausdorff measure equal to zero. This will be shown in Section \ref{sec:hausdorffmeasure}.
\subsection{Construction of $\mu$ for Theorem \ref{thm:fastdecay}}
We now construct the measure $\mu$ described in Theorem \ref{thm:fastdecay}. The biggest difference between this construction and the one in the previous subsection is that we do not state explicit quantitative estimates describing the decay of the Fourier transform of the measures.

Choose a positive integer $n_1$ and let $M_1$ be a large integer. We will choose the sequences $\{n_j : j \geq 2\}$ and $\{M_j : j \geq 2\}$ to be rapidly increasing sequences of integers satisfying a certain set of conditions below. For each $j$, we choose prime numbers  $q_{j, 1}, \ldots, q_{j,n_j}$ with $M_j \leq q_{j,1} \ll \cdots \ll q_{j, n_j}$. When we choose the $M_j$, we will impose the condition that $M_{j+1} \gg q_{j, n_j}$ as well. Given $q_{j,1}, \ldots, q_{j,n_j}$ we define the function $g_j$ as in Subsection \ref{subsec:singlescalefastdecay}.

We define the function $\mu_k$ to be the pointwise product
\[\mu_k(x) = \prod_{j=1}^k g_j(x)\]
so $\hat \mu_1(s) = \hat g_1(s)$ and so that for any $k \geq 2$
\[\hat \mu_k(s) = \hat g_k(s) * \hat \mu_{k-1}(s).\]

We are now ready to state the main estimate on $\hat \mu_k$.
\begin{lemma}[Main estimate for $\hat \mu_k$]\label{lem:mukfastdecaymainest}
Suppose that the functions $g_k$ are chosen as above. Then provided that the sequences $n_j$ and $M_j$ are chosen appropriately, the measures $\hat \mu_k$ satisfy the following estimates for all integers $k \geq l$. All implicit constants below are assumed to be independent of $k$ and $l$.
\begin{IEEEeqnarray}{rCll}
|\hat \mu_k(0)| & \leq & 2 \label{eq:muk0fastedecay} \\
|\hat \mu_k(s) -  \hat \mu_l(s)| & \leq & \sum_{j=l+1}^{k} M_j^{-99} \quad & \text{if $0 \leq |s| \leq M_l/4$} \label{eq:muksmallfastdecay} \\
|\hat \mu_k(s)| & \lesssim &  n_k^{-1/2} \quad & \text{if $|s| \geq M_k/4$} \label{eq:mukmediumfastdecay}\\
|\hat \mu_k(s)| & \lesssim & \exp \left(-\frac{1}{2} \left|\frac{1}{8} \psi(q_{k, n_k}) s \right|^{\frac{3}{4}} \right) \quad & \text{if $|s| \geq 8\psi(q_{k,n_k})^{-1}$}\label{eq:muklargefastdecay}.
\end{IEEEeqnarray}
\end{lemma}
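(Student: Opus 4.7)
My plan is to prove the four estimates by induction on $k$, with each inductive step being a single application of the Convolution Stability Lemma (Lemma \ref{lem:convolutionstability2}) to the decomposition $\hat\mu_k = \hat g_k * \hat \mu_{k-1}$. The parameters $n_k$ and $M_k$ (and hence the primes $q_{k,1},\dots,q_{k,n_k}$) will be chosen inductively, each stage using the estimates at the previous stage to dictate how large $n_k$ and $M_k$ must be taken.

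For the base case $k=1$, all four bounds follow directly from Lemma \ref{lem:gkestimatefastdecay}: the identity $\hat g_1(0)=1$ gives \eqref{eq:muk0fastedecay}; statement \eqref{eq:muksmallfastdecay} is vacuous; the bound $|\hat g_1(s)|\lesssim 1/n_1$ yields \eqref{eq:mukmediumfastdecay} once $n_1$ is large enough to absorb the implied constant into $n_1^{-1/2}$; and the Schwartz tail gives \eqref{eq:muklargefastdecay} directly. For the inductive step from $k-1$ to $k$, I would apply Lemma \ref{lem:convolutionstability2} with $H = \hat \mu_{k-1}$, $G = \hat g_k$, and the parameter choices
\[\delta = C/n_k, \qquad N_1 = \psi(q_{k-1,n_{k-1}})^{-1}, \qquad N_2 = M_k, \qquad N_3 = \psi(q_{k,n_k})^{-1},\]
where $C$ is the implicit constant from $|\hat g_k(s)|\lesssim 1/n_k$. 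The $G$-hypotheses \eqref{eq:2 G global}--\eqref{eq:2 GVeryLarge} are exactly the content of Lemma \ref{lem:gkestimatefastdecay} (the vanishing condition uses $M_k\le q_{k,1}$, which is built into the construction). The $H$-hypotheses come from the inductive hypothesis: the global bound $|H|\le 2$ follows from \eqref{eq:muk0fastedecay} combined with positivity of $\mu_{k-1}$ (for a positive measure, $|\hat\nu(\xi)|\le\hat\nu(0)$), and the exponential tail \eqref{eq:2 HVeryLarge} is exactly \eqref{eq:muklargefastdecay} at stage $k-1$ with our chosen $N_1$. The side conditions $\delta<1/N_1^2$ and ``$N_2$ sufficiently large'' translate to the explicit inequalities $n_k > C\,\psi(q_{k-1,n_{k-1}})^{-2}$ and $M_k \gg \psi(q_{k-1,n_{k-1}})^{-1}$, both achieved by choosing $n_k$ first and then $M_k$ sufficiently large.

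The three conclusions \eqref{eq:2 convolve1}--\eqref{eq:2 convolve3} then yield, in order: $|\hat \mu_k(s) - \hat \mu_{k-1}(s)| \lesssim M_k^{-99}$ for $|s|<M_k/4$, which combines by triangle inequality with the inductive form of \eqref{eq:muksmallfastdecay} to give the telescoping bound for any $l<k$; the bound $|\hat \mu_k(s)|\lesssim \delta^{1/2}=\sqrt{C}\,n_k^{-1/2}$ for $|s|\geq M_k/4$, which is \eqref{eq:mukmediumfastdecay}; and $|\hat \mu_k(s)|\lesssim \exp(-\tfrac{1}{2}|s/(8N_3)|^{3/4})$ for $|s|\geq 8N_3$, which matches \eqref{eq:muklargefastdecay} since $1/(8N_3)=\psi(q_{k,n_k})/8$. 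Finally, \eqref{eq:muk0fastedecay} at stage $k$ is obtained by unwinding:
\[|\hat \mu_k(0)| \leq |\hat \mu_1(0)| + \sum_{j=2}^{k} |\hat \mu_j(0) - \hat \mu_{j-1}(0)| \leq 1 + C\sum_{j=2}^{k} M_j^{-99} \leq 2,\]
which holds once the $M_j$ grow quickly enough that the series is summable with total at most $1/C$.

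The main obstacle here is not any individual inequality but the careful simultaneous bookkeeping of parameter growth rates. The sequence $n_j$ must outpace $\psi(q_{j-1,n_{j-1}})^{-2}$ to satisfy the $\delta<1/N_1^2$ condition; the $M_j$ must outpace $\psi(q_{j-1,n_{j-1}})^{-1}$ to make $N_2$ sufficiently large relative to $N_1$; the primes $q_{j,1},\dots,q_{j,n_j}$ must be spaced according to \eqref{eq:qkjcondition}; and one needs $M_{j+1}\gg q_{j,n_j}$ as well. Showing that all these constraints can be satisfied compatibly --- and that the constants absorbed into $\lesssim$ at each stage remain independent of $k$ and $l$ as the lemma asserts --- is the bulk of the remaining work beyond invoking Lemma \ref{lem:convolutionstability2}.
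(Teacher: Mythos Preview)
Your proposal is correct and follows essentially the same approach as the paper: an induction on $k$ in which each step applies Lemma~\ref{lem:convolutionstability2} with $H=\hat\mu_{k-1}$, $G=\hat g_k$, $N_1=\psi(q_{k-1,n_{k-1}})^{-1}$, $N_2=M_k$, $N_3=\psi(q_{k,n_k})^{-1}$, and $\delta\sim 1/n_k$, followed by the telescoping triangle-inequality argument for \eqref{eq:muksmallfastdecay} and \eqref{eq:muk0fastedecay}. The paper's proof differs only in cosmetic indexing (stepping from $k$ to $k+1$ rather than $k-1$ to $k$) and in fixing the specific choice $n_{k+1}=100N_1^2$ rather than leaving it as an inequality.
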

\begin{proof}
Let $n_1$ and $M_1$ be positive integers, and choose prime numbers $q_{1, 1}, \ldots, q_{1, n_1}$ such that $1 \leq q_{1,1} < q_{1,2} < \cdots < q_{1, n_1}$ satisfy the conditions of Lemma \ref{lem:gkestimatefastdecay}. Then $\hat g_1$ satisfies the estimates of Lemma \ref{lem:gkestimatefastdecay} and in particular satisfies the estimates of Lemma \ref{lem:mukfastdecaymainest}.

Given $g_1, \ldots, g_k$ such that $\mu_k$ satisfies the four conditions above, we will describe how to choose the integers $n_{k+1}$ and $M_{k+1}$ and how to choose the function $g_{k+1}$ so that $\mu_{k+1}$ will satisfy the four conditions above. Let $N_1 = \psi(\beta(M_k))^{-1}$. Lemma \ref{lem:convolutionstability2} requires that the quantity $\delta$ is chosen so that $\delta < \frac{1}{N_1^2}$; hence, we select $n_{k+1} = 100 N_1^2$. Choose $M_{k+1} = N_2 \gg n_k$ to be a prime number that is sufficiently large to satisfy the conditions of Lemma \ref{lem:convolutionstability2}. Take $N_2 = q_{k+1, 1} < \cdots < q_{k+1, n_{k+1}}$ sufficiently well-spaced to satisfy the conditions of Lemma \ref{lem:gkestimatefastdecay}. Then, choose $N_3 = \frac{1}{\psi(q_{k+1, n_{k+1}})} \gg q_{k+1,1}$. With these choices, we define $g_{k+1}$ as in Subsection \ref{subsec:singlescalefastdecay}. Hence Lemma \ref{lem:gkestimatefastdecay} implies that $\hat g_{k+1}$ satisfies the estimates required to serve as the function $G$ in Lemma \ref{lem:convolutionstability2}. 

Hence, we can apply Lemma \ref{lem:convolutionstability2} with $H = \hat \mu_k, G = \hat g_{k+1}$, $N_1 = \psi(q_{k, n_k})^{-1}$, $N_2 = q_{k+1,1}$, and $N_3 = \psi(q_{k+1, n_{k+1}})^{-1}$, and $\delta = \frac{1}{n_{k+1}}$.

This implies the estimates
\begin{IEEEeqnarray}{rCll}
| \hat \mu_{k+1}(s) - \hat \mu_{k}(s)| & \leq &  M_{k+1}^{-99} \quad & \text{if $0 \leq |s| \leq \frac{M_{k+1}}{4}$} \label{eq:mukplus1smallfastdecay}\\
|\hat \mu_{k+1}(s)| & \lesssim & n_{k+1}^{-1/2} \quad & \text{if $|s| \geq \frac{M_{k+1}}{4}$} \label{eq:mukplus1mediumfastdecay}\\
|\hat \mu_{k+1}(s)| & \lesssim & \exp \left(-\frac{1}{2} \left|\frac{1}{8} \psi(q_{k+1, n_{k+1}}) s \right|^{\frac{3}{4}}  \right) \quad & \text{if $|s| \geq 8\psi(q_{k+1,n_{k+1}})^{-1}$.}\label{eq:mukplus1largefastdecay}
\end{IEEEeqnarray}
Hence $\hat \mu_{k+1}$ satisfies the estimates \eqref{eq:mukmediumfastdecay} and \eqref{eq:muklargefastdecay}. In order to check \eqref{eq:muksmallfastdecay}, assume $l < k+1$ and $|s| \leq \frac{M_l}{4}$. If $l = k$, then the inequality follows from \eqref{eq:mukplus1smallfastdecay}. If $l < k$, then applying the inductive assumption \eqref{eq:muksmallfastdecay} to estimate the difference $\hat \mu_k - \hat \mu_l$ and applying \eqref{eq:mukplus1smallfastdecay} to estimate the difference $\hat \mu_{k+1}- \hat \mu_k$ gives
\begin{IEEEeqnarray*}{rCl}
|\hat \mu_{k+1}(s) - \hat \mu_l(s)| & \leq & |\hat \mu_{k+1}(s) - \hat \mu_k(s)| + |\hat \mu_k(s) - \hat \mu_l(s)| \\
& \leq & M_{k + 1}^{-99} + \sum_{j=l+1}^k M_k^{-99} \\
& = & \sum_{j=l+1}^{k+1} M_k^{-99}.
\end{IEEEeqnarray*}
This establishes \eqref{eq:muksmallfastdecay} for $\hat \mu_{k+1}$. Applying \eqref{eq:muksmallfastdecay} with $l = 1$ and $s = 0$, we see that
\begin{IEEEeqnarray*}{rCl}
|\hat \mu_{k+1}(0)| & \leq & |\hat \mu_{k+1}(0) - \hat \mu_1(0)| +  |\hat \mu_1(0)| \\
& \leq & \sum_{j=2}^{k+1} M_j^{-99} + 1 \\
& \leq & 2
\end{IEEEeqnarray*}
assuming the $M_j$ grow sufficiently rapidly.
\end{proof}
\begin{lemma}\label{lem:uniformconvergencefastdecay}
The sequence $\hat \mu_k$ converges uniformly for all $s \in \mathbb{Z}$ to a function $M(s)$. This function $M(s)$ has the property that $|M(s)| \to 0$ as $|s| \to \infty$.
\end{lemma}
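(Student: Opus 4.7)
My plan is to mimic the proof of Lemma \ref{lem:uniformconvergenceslowdecay}, using the three-regime estimates from Lemma \ref{lem:mukfastdecaymainest}. The role played by $\theta(|s|)\omega(|s|)$ in the slow-decay case will be taken over by the tail bound $n_k^{-1/2}$ from \eqref{eq:mukmediumfastdecay}; although this bound depends on $k$ rather than on $s$, the rapid growth of $n_k$ and of the $M_j$ will make the argument work uniformly.

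First, to prove the sequence is uniformly Cauchy, I would fix $\epsilon > 0$ and choose $K$ so large that both $n_K^{-1/2}$ and the tail $\sum_{j \geq K+1} M_j^{-99}$ are smaller than $\epsilon$, up to the implicit constants. Given $m > n \geq K$ and $s \in \mathbb{Z}$, the easy case is $|s| \leq M_n/4$, where \eqref{eq:muksmallfastdecay} immediately gives $|\hat \mu_m(s) - \hat \mu_n(s)| \leq \sum_{j=n+1}^m M_j^{-99} \lesssim \epsilon$. In the remaining case $|s| > M_n/4$, I would let $k^{\ast}$ be the largest index with $M_{k^{\ast}}/4 \leq |s|$, so that $k^{\ast} \geq n \geq K$. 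For each index $k$ with $k \leq k^{\ast}$, estimate \eqref{eq:mukmediumfastdecay} gives $|\hat \mu_k(s)| \lesssim n_k^{-1/2} \leq n_K^{-1/2}$. For $k > k^{\ast}$, combining $|\hat \mu_{k^{\ast}}(s)| \lesssim n_{k^{\ast}}^{-1/2}$ with the one-step estimate $|\hat \mu_{k^{\ast}+1}(s) - \hat \mu_{k^{\ast}}(s)| \lesssim M_{k^{\ast}+1}^{-99}$ and then applying \eqref{eq:muksmallfastdecay} with $l = k^{\ast}+1$ (valid because $|s| < M_{k^{\ast}+1}/4$) gives $|\hat \mu_k(s)| \lesssim n_K^{-1/2} + \sum_{j \geq K+1} M_j^{-99}$. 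Applying the triangle inequality with these uniform-in-$k$ bounds on $|\hat\mu_m(s)|$ and $|\hat\mu_n(s)|$ completes the argument.

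The uniform Cauchy property yields a uniform limit $M(s)$ on $\mathbb{Z}$. For the Rajchman conclusion $|M(s)| \to 0$, I would recycle the estimate above: for any fixed $N$, whenever $|s| > M_N/4$ the same calculation shows $|\hat \mu_k(s)| \lesssim n_N^{-1/2} + \sum_{j \geq N+1} M_j^{-99}$ uniformly in $k$. Passing $k \to \infty$ gives the same bound on $|M(s)|$ whenever $|s| > M_N/4$. Since $n_N, M_N \to \infty$ as $N \to \infty$, this forces $|M(s)| \to 0$ as $|s| \to \infty$.

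The delicate point is the transitional range $M_{k^{\ast}}/4 \leq |s| < M_{k^{\ast}+1}/4$, where \eqref{eq:muksmallfastdecay} of Lemma \ref{lem:mukfastdecaymainest} is not directly applicable to the single step from $\hat\mu_{k^{\ast}}$ to $\hat\mu_{k^{\ast}+1}$. Rather than sacrifice a whole scale, I would reach into the proof of Lemma \ref{lem:mukfastdecaymainest} and invoke \eqref{eq:2 convolve1} of Lemma \ref{lem:convolutionstability2} directly (with $H = \hat\mu_{k^{\ast}}$, $G = \hat g_{k^{\ast}+1}$, and $N_2 = M_{k^{\ast}+1}$), which supplies the needed $M_{k^{\ast}+1}^{-99}$ bound throughout $|s| < M_{k^{\ast}+1}/4$ and allows the Cauchy estimate to close.
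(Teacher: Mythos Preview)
Your proposal is correct and follows essentially the same approach as the paper: split according to whether $|s|$ is small (where the Cauchy estimate \eqref{eq:muksmallfastdecay} applies directly) or large (where you locate the scale $k^\ast$ with $M_{k^\ast}/4 \leq |s| < M_{k^\ast+1}/4$, use \eqref{eq:mukmediumfastdecay} at that scale, and then propagate forward via the telescoping differences). In fact, you are slightly more careful than the paper about the transitional step: the paper applies \eqref{eq:muksmallfastdecay} with $l = k^\ast$ even though the stated hypothesis is $|s| \leq M_l/4$ while we actually have $|s| \geq M_{k^\ast}/4$; you correctly observe that the one-step bound $|\hat\mu_{k^\ast+1}(s)-\hat\mu_{k^\ast}(s)| \lesssim M_{k^\ast+1}^{-99}$ comes directly from \eqref{eq:2 convolve1} and only needs $|s| < M_{k^\ast+1}/4$, which closes the gap.
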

\begin{proof}
The proof is similar to that of Lemma \ref{lem:uniformconvergenceslowdecay}. Let $\epsilon > 0$. Because $n_k \to \infty$, there is an index $k_0$ such that $n_{k_0}^{-1/2} + \sum_{j=k_0 + 1}^{\infty} M_j^{-99} < \epsilon / 2C$, where $C$ is the implicit constant from Lemma \ref{lem:mukfastdecaymainest}. 

Suppose $|s| > \frac{M_{k_0}}{4}$, and choose $l \geq k_0$ such that $\frac{M_{l}}{4} \leq |s| < \frac{M_{l + 1}}{4}$.  We have that $|\hat \mu_l(s)| \lesssim n_l^{-1/2} \leq n_{k_0}^{-\frac{1}{2}} < \frac{\epsilon}{2}$. Hence for $k \geq l$, we have
\begin{IEEEeqnarray*}{rCl}
|\hat \mu_k(s)| & \leq & |\hat \mu_l(s)| + |\hat \mu_k(s) - \hat \mu_l(s)| \\
& \leq & \frac{\epsilon}{2} + \sum_{j=l+1}^k M_j^{-99} \\
& \leq & \frac{\epsilon}{2} + \sum_{j=k_0 + 1}^{\infty} M_j^{-99} \\
& \leq & \epsilon.
\end{IEEEeqnarray*}
Hence $|\hat \mu_k(s)| \leq \epsilon$ for all $|s| \geq \frac{M_{k_0}}{4}$ and all $k \geq k_0$. 

If $|s| \leq \frac{M_{k_0}}{4}$ and $k_0 \leq l \leq k$, then we have
\[|\hat \mu_k(s) - \hat \mu_l(s)| \leq \sum_{j=l+1}^k M_j^{-99} \leq \sum_{j=k_0 + 1}^{\infty} M_j^{-99} < \epsilon/2.\] 
This proves that the sequence $\hat \mu_k(s)$ is uniformly Cauchy and hence uniformly convergent. Let $M(s)$ denote the uniform limit of this sequence.

Finally, we verify that $M(s) \to 0$ as $|s| \to \infty$. Suppose $|s|$ is such that $\frac{M_k}{4} \leq |s| \leq \frac{M_{k+1}}{4}$. Then we have from Lemma \ref{lem:mukfastdecaymainest} that $|\hat \mu_k(s)| \lesssim n_k^{-1/2}$, and
\begin{IEEEeqnarray*}{rCl}
M(s) & \lesssim & |\hat \mu_k(s)| + \limsup_{l \to \infty} |\hat \mu_l(s) - \hat \mu_k(s)| \\
& \lesssim & n_k^{-1/2} + \sum_{j=k+1}^{\infty} M_j^{-99} \\
& \lesssim & n_k^{-1/2}
\end{IEEEeqnarray*}
since $M_{k+1} \gg n_k$.  Since the sequence $n_k \to \infty$, this shows that $M(s) \to 0$ as $|s| \to \infty$, as desired.
\end{proof}

The rest of this proof is similar to the proof of Theorem \ref{thm:slowdecay}. In order to apply Lemma \ref{lem:kahane}, we use the fact from Lemma \ref{lem:doublingmajorant} that $M$ is majorized by $N(|s|)$, where $N$ is a doubling function. This will allow us to apply Lemma \ref{lem:kahane}.

We are now ready to show that the measures $\mu_k$ converge to a weak limit.

\begin{lemma}\label{lem:weaklimitfastdecay}
The sequence of measures $\mu_k$ has a weak limit $\mu$. This weak limit $\mu$ satisfies the estimate
\[|\hat \mu(\xi)| \to 0 \quad \text{as $|\xi| \to \infty$ in $\mathbb{R}$}.\]
Hence $\mu$ is a Rajchman measure.
\end{lemma}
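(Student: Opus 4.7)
The plan is to follow closely the strategy used in Lemma \ref{lem:weaklimit}, adjusting only where the absence of a specific quantitative decay rate matters. First, since each $\mu_k$ has total variation bounded by $2$ by \eqref{eq:muk0fastedecay} and is supported on $[0,1]$, the Banach--Alaoglu theorem yields a subsequence $\mu_{k_j}$ converging weakly to some measure $\mu$ supported on $[0,1]$. Because $\mu$ is supported on $[0,1]$, its integer Fourier coefficients are obtained by integrating continuous compactly supported exponentials $e(-s\cdot)$ against $\mu$, so $\hat\mu(s) = \lim_j \hat\mu_{k_j}(s) = M(s)$ for every $s\in\mathbb{Z}$, where $M$ is the uniform limit from Lemma \ref{lem:uniformconvergencefastdecay}.

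Next, I would promote this subsequential weak limit to a limit of the full sequence by the corollary to Theorem 25.10 of Billingsley \cite{Billingsley12}: any other weakly convergent subsequence $\nu_{k_j}$ has a weak limit $\nu$ also supported on $[0,1]$ whose Fourier coefficients again coincide with $M(s)$ at every integer $s$. Since a measure supported on $[0,1]$ is uniquely determined by its Fourier--Stieltjes series, $\nu = \mu$, so the entire sequence $\mu_k$ converges weakly to $\mu$.

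For the Rajchman property, the issue is that we only know $|M(s)| \to 0$ along integers $s$, not at all real $\xi$. Here is where the proof genuinely diverges from that of Lemma \ref{lem:weaklimit}: in that earlier setting we had a concrete doubling bound $\theta(|s|)\omega(|s|)$, whereas now we only know $|\hat\mu(s)| = |M(s)| \to 0$ qualitatively. To bridge this gap, I would apply Lemma \ref{lem:doublingmajorant} to the function $M$ to produce a decreasing function $N:\mathbb{R}^+\to\mathbb{R}^+$ with $N(\xi)\to 0$, satisfying the doubling condition \eqref{eq:doubling}, and such that $|\hat\mu(s)| = |M(s)| \leq N(|s|)$ for every $s\in\mathbb{Z}$. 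Then Lemma \ref{lem:kahane} upgrades this pointwise bound on integers to the bound $|\hat\mu(\xi)| \lesssim N(|\xi|)$ for all $\xi\in\mathbb{R}$. Since $N(\xi)\to 0$, we conclude $|\hat\mu(\xi)|\to 0$ as $|\xi|\to\infty$, which is exactly the Rajchman condition.

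The step I expect to require the most care is verifying that the same Billingsley-style uniqueness argument still applies here, since unlike in Lemma \ref{lem:weaklimit} we have no explicit decay rate to invoke — we only rely on the qualitative uniform convergence of $\hat\mu_k(s)$ to $M(s)$ on $\mathbb{Z}$, together with the fact that $M$ determines the measure on $[0,1]$ uniquely. The other delicate point is confirming that the doubling majorant construction of Lemma \ref{lem:doublingmajorant} preserves the property $N(\xi)\to 0$ without introducing any quantitative rate, so that Lemma \ref{lem:kahane} can be invoked without needing information the theorem does not assume.
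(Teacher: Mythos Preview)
Your proposal is correct and matches the paper's own proof essentially line for line: the paper simply says the argument is ``almost exactly the same as the proof of Lemma~\ref{lem:weaklimit}, but when we apply Lemma~\ref{lem:kahane}, we use $N(|s|)$ as the bound on $M(s)$, where $N(s)$ is the function constructed in Lemma~\ref{lem:doublingmajorant}.'' You have identified precisely this modification and the reason it is needed.
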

\begin{proof}
This proof is almost exactly the same as the proof of Lemma \ref{lem:weaklimit}, but when we apply Lemma \ref{lem:kahane}, we use $N(|s|)$ as the bound on $M(s)$, where $N(s)$ is the function constructed in Lemma \ref{lem:doublingmajorant}.
\end{proof}

\begin{lemma}\label{lem:supportfastdecay}
Let $\mu$ be the weak limit in Lemma \ref{lem:weaklimitfastdecay}. Then the support of $\mu$ is contained in $E(\psi)$.
\end{lemma}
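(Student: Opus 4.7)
The plan is to mirror the argument of Lemma \ref{lem:support}, adapted to the construction of Subsection \ref{subsec:singlescalefastdecay} where each $g_k$ is supported on a neighborhood of rationals with denominators $q_{k,1},\ldots,q_{k,n_k}$. First I would locate the support of a single factor. Since $\phi$ is supported in $[-1/2,0]$, each bump in the definition of $g_k$,
\[\frac{1}{q_{k,j}\psi(q_{k,j})}\phi\!\left(\frac{1}{\psi(q_{k,j})}\left(x-\tfrac{p}{q_{k,j}}\right)\right),\]
has support contained in $\bigl[\tfrac{p}{q_{k,j}}-\tfrac12\psi(q_{k,j}),\,\tfrac{p}{q_{k,j}}+\tfrac12\psi(q_{k,j})\bigr]$. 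Hence
\[\supp g_k\subset\bigcup_{j=1}^{n_k}\bigcup_{p=0}^{q_{k,j}-1}\left[\frac{p}{q_{k,j}}-\frac{\psi(q_{k,j})}{2},\,\frac{p}{q_{k,j}}+\frac{\psi(q_{k,j})}{2}\right].\]

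Next I would propagate this to $\mu_k$ and then to $\mu$. Because $\mu_k=\prod_{j\le k}g_j$ is a pointwise product, $\supp\mu_k$ is the intersection of the $\supp g_j$ for $j\le k$. Since $\mu$ is the weak limit of the $\mu_k$ (Lemma \ref{lem:weaklimitfastdecay}) and each $\mu_k$ is supported in the compact set $\supp g_k$, a standard weak-convergence argument shows
\[\supp\mu\subset\bigcap_{k=1}^{\infty}\bigcup_{j=1}^{n_k}\bigcup_{p=0}^{q_{k,j}-1}\left[\frac{p}{q_{k,j}}-\frac{\psi(q_{k,j})}{2},\,\frac{p}{q_{k,j}}+\frac{\psi(q_{k,j})}{2}\right].\]

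Finally I would verify the well-approximability condition. For any $x\in\supp\mu$ and any $k\ge1$, the containment above produces a prime $q_{k,j(k)}\in\{q_{k,1},\ldots,q_{k,n_k}\}$ and an integer $p_k$ with $|x-p_k/q_{k,j(k)}|\le\tfrac12\psi(q_{k,j(k)})\le\psi(q_{k,j(k)})$. The only thing left to check is that this yields infinitely many distinct pairs $(p,q)$. But by the construction in Subsection \ref{subsec:singlescalefastdecay} we arranged $q_{k,1}\ge M_k$ and $M_{k+1}\gg q_{k,n_k}$, so the denominators $q_{k,j(k)}$ tend to infinity with $k$, forcing the pairs $(p_k,q_{k,j(k)})$ to be pairwise distinct for all large $k$. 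Hence $x\in E(\psi)$, as desired.

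The argument is routine; the only mild subtlety, and thus the main point to be careful about, is the step from $\supp\mu_k$ to $\supp\mu$ under weak convergence: one uses that each of the closed sets on the right above contains $\supp\mu_n$ for every $n\ge k$, together with the fact that if $\mu_n\rightharpoonup\mu$ and all $\mu_n$ are supported in a common closed set $K$, then $\supp\mu\subset K$. Everything else is the same bookkeeping as in Lemma \ref{lem:support}, with the collection of admissible denominators replaced by $\{q_{k,1},\ldots,q_{k,n_k}\}$ instead of the interval of primes $[M_k,\beta(M_k)]$.
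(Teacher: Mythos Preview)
Your proposal is correct and follows exactly the same approach as the paper, which simply remarks that the argument is carried out ``in a similar manner to Lemma \ref{lem:support}'' by producing for each $k$ a denominator $q_{k,j_k}$ and numerator $p_{k,j_k}$ with $|x-p_{k,j_k}/q_{k,j_k}|\le\psi(q_{k,j_k})$. In fact you supply more detail than the paper does, including the careful justification of the support containment under weak convergence and the observation that the $q_{k,j(k)}$ tend to infinity.
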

\begin{proof}
This lemma can be shown in a similar manner to Lemma \ref{lem:support}; we see that if $x \in \supp(\mu)$ then for each $k$, there exists a denominator $q_{k,j_k}$ and a numerator $p_{k, j_k}$ such that $\left|x - \frac{p_{k, j_k}}{q_{k, j_k}} \right| \leq \psi(q_{k, j_k})$; hence, $x$ is $\psi$-well-approximable.
\end{proof}
Therefore, the measure $\mu$ satisfies all of the properties promised in the statement of Theorem \ref{thm:fastdecay}. Thus the proof of Theorem \ref{thm:fastdecay} is complete.
\section{A bound on the generalized Hausdorff measure} \label{sec:hausdorffmeasure}
To complete the proof of Theorem \ref{thm:compact}, we must show that $F_\alpha$, which is taken to be the support of $\mu_{k,\omega}$, has zero $\alpha$-Hausdorff measure.
\begin{lemma}
Let $F_\alpha$ be a closed subset of
\[\left\{x : \left| x - \frac{r}{q} \right| < \psi(q) \, \text{for some integers $0 \leq r \leq q-1$, $M_k \leq q \leq \beta(M_k)$},\text{ } q \text{ prime}, 
 \text{ } k \in \mathbb{N}\right\}.\]
Let $\epsilon > 0$. Then there exists a cover $\mathcal{U}$ of $F_\alpha$ by open intervals $U$ such that
\[\sum_{U \in \mathcal{U}} \alpha(U) < \epsilon.\] 
\end{lemma}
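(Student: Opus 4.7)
The plan is to exploit the fact that $F_\alpha = \supp \mu_{\chi, \omega}$ is actually contained in the intersection $\bigcap_{k=1}^{\infty} S_k$, where
\[S_k := \bigcup_{\substack{M_k \leq q \leq \beta(M_k) \\ q \text{ prime}}} \bigcup_{p=0}^{q-1} \left[\frac{p}{q} - \frac{\psi(q)}{2}, \frac{p}{q} + \frac{\psi(q)}{2}\right].\]
A single level $S_k$ already covers $F_\alpha$, but gives $\sum_{U \in S_k} \alpha(|U|) \approx \sum_{q} q \cdot q^{-2} \approx 1$ (using $\psi(q) = \alpha^{-1}(q^{-2})$ and $\sum 1/q \leq 2$), which does not tend to zero. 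Instead I would use two consecutive levels: cover $F_\alpha$ by those intervals $J$ of $S_{k+1}$ that intersect some interval $I$ of $S_k$, and show that the resulting $\alpha$-sum tends to zero as $k \to \infty$.

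The first step is an interval-counting estimate. For each fixed $I = I_{p,q} \subset S_k$ of length $\psi(q)$, and each fixed prime $q' \in [M_{k+1}, \beta(M_{k+1})]$, rationals $p'/q'$ are spaced $1/q'$ apart, so the number of $p'$ with $J_{p',q'} \cap I_{p,q} \neq \emptyset$ is at most $\lesssim q'\psi(q) + 1$. The well-separation condition \eqref{eq:Mkcondition}, namely $M_{k+1} \geq \psi(\beta(M_k))^{-2}$, guarantees $q'\psi(q) \gg 1$ in this range, so the count collapses to $\lesssim q'\psi(q)$.

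Next, I would estimate the total $\alpha$-value of intervals of $S_{k+1}$ meeting a fixed $I_{p,q}$. Using \eqref{eq:alpha def} and the doubling property of $\alpha$ (a consequence of \eqref{eq:alphagrowth}), $\alpha(\psi(q')) = q'^{-2}$, so
\[\sum_{\substack{J \subset S_{k+1} \\ J \cap I_{p,q} \neq \emptyset}} \alpha(\psi(q')) \; \lesssim \; \sum_{q'} q'\psi(q) \cdot q'^{-2} \; = \; \psi(q) \sum_{q'} \frac{1}{q'} \; \lesssim \; \psi(q),\]
where the last bound uses the normalization $\sum_{q'} 1/q' \leq 2$ (with $\chi \equiv 1$ for Theorem \ref{thm:compact}). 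Summing over all $I_{p,q} \subset S_k$ (with $p = 0, \ldots, q-1$ and $q$ a prime in $[M_k, \beta(M_k)]$) yields
\[\sum_{q} q\psi(q) \; \lesssim \; \sum_{M_k \leq q \leq \beta(M_k)} q^{1-\tau+o(1)},\]
and since $\tau > 2$ while $\beta(M_k)$ is only polynomially large in $M_k$, this is bounded by $M_k^{2-\tau+o(1)} \to 0$. So for all sufficiently large $k$ the cover satisfies $\sum \alpha(|U|) < \epsilon$.

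The main obstacle is really just the interval-counting step, which requires the separation \eqref{eq:Mkcondition} to force intervals of $S_{k+1}$ to be much smaller than intervals of $S_k$; without that, the approximation $q'\psi(q)+1 \lesssim q'\psi(q)$ fails and the estimate degenerates. Everything else is bookkeeping using the polynomial hypotheses on $\alpha$ and $\psi$. A minor technical point is that the lemma asks for \emph{open} intervals, but this is handled by slightly enlarging each closed $J_{p',q'}$ to an open interval of the same order of length, which does not affect the asymptotics thanks to the doubling property of $\alpha$.
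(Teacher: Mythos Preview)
Your proposal is correct and follows essentially the same approach as the paper: both proofs use two consecutive levels of the cover, count the number of fine-scale intervals meeting each coarse-scale interval via the separation condition \eqref{eq:Mkcondition}, apply $\alpha(\psi(q)) = q^{-2}$ and the normalization $\sum_q 1/q \lesssim 1$, and conclude using $\tau > 2$ together with the polynomial size of $\beta(M_k)$. The only cosmetic differences are an index shift (the paper uses levels $k-1$ and $k$ rather than $k$ and $k+1$) and that the paper takes the cover to be the intersections $J \cap I$ rather than the full fine-scale intervals $J$, which makes no difference to the estimate.
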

\begin{proof}
The set $F_{\alpha}$ satisfies the following containment:
\[F_\alpha \subset \bigcap_{k=1}^{\infty} \bigcup_{\substack{M_k \leq q \leq \beta(M_k)\\ q \text{ prime}}} \bigcup_{p=0}^{q-1} \left[\frac{p}{q}-\psi(q),  \frac{p}{q}+\psi(q)\right].\]
For any $k$ the following collection of closed intervals is a cover for $F_\alpha$:
\[\left\{\left[\frac{p}{q}-\psi(q),  \frac{p}{q}+\psi(q)\right]: M_k \leq q \leq \beta(M_k), q \, \text{prime}, 0 \leq p \leq q-1\right\}.\]
Denote this collection as $\mathcal{I}_k$. The following collection $\mathcal{U}$ is also a cover of $F_\alpha$:
\[\mathcal{U} = \left\{J\bigcap\left[\frac{p}{q}-\psi(q),  \frac{p}{q}+\psi(q)\right]: M_k \leq q \leq \beta(M_k), q \, \text{prime}, 0 \leq p \leq q-1; J \in \mathcal{I}_{k-1}\right\}.\]
Fix a prime number $q$ with $M_k \leq q \leq \beta(M_k)$ and let $J \in \mathcal{I}_{k-1}$. Observe that the intersection of $J$ with the interval $\left[\frac{p}{q} - \psi(q), \frac{p}{q} + \psi(q) \right]$ is either empty or is a closed interval of length at most $2 \psi(q)$. We claim that the number of such intervals that intersect $J$ satisfies
\[\# \left\{p: \left[\frac{p}{q}-\psi(q),  \frac{p}{q}+\psi(q)\right]\cap J \neq \emptyset\right\} \sim |J|q,\]
where $|J|$ denotes the length of the interval $J$.

The interval $J$ belongs to $\mathcal{I}_{k-1}$. Therefore, $|J| \geq \psi(\beta(M_{k-1}))^{-2}$ by the assumption \eqref{eq:Mkcondition}. Hence $|J| \gg \frac{1}{M_k}$, and therefore, $|J| \gg \frac{1}{q}$.

The interval $[p/q - \psi(q), p/q + \psi(q)]$ intersects $J$ if and only if $p/q$ lies in a $\psi(q)$-neighborhood of $J$. Since $\psi(q) \approx q^{-\tau}$ by \eqref{eq:taudef} and $\tau > 2$, we have that $\psi(q) \ll 1/q$ if $k$ is sufficiently large. Hence, $[p/q - \psi(q), p/q + \psi(q)]$ intersects $J$ if and only if $p/q$ lies in an interval $J'$ of length $|J'| = |J| + 2 \psi(q) \sim |J|$.

Write $J' = [a,b]$. Then the smallest multiple of $p/q$ contained in $J'$ is $\frac{\lceil q a \rceil}{q}$, and the largest multiple of $p/q$ contained in $J'$ is $\frac{\lfloor q b\rfloor}{q}$. So the total number of multiples of $p/q$ contained in $J'$ is 
\begin{IEEEeqnarray*}{Cl}
& \lfloor qb \rfloor - \lceil q a \rceil + 1 \\
= & qb - qa + O(1) \\
= & q|J'| + O(1) \\
\sim &  q |J| + O(1).
\end{IEEEeqnarray*}
Since $|J| \gg 1/q$, we have $q |J| \gg 1.$ Therefore,
\[\#\left\{p : \left[\frac{p}{q} - \psi(q), \frac{p}{q} + \psi(q) \right] \cap J \neq \emptyset \right\} \sim |J|q,\]
as claimed.

Then
\begin{IEEEeqnarray*} {rCl}
\sum_{\mathrm{U} \in \mathcal{U}} \alpha(\diam(\mathrm{U})) & \leq & \sum_{J\in \mathcal{I}_{k-1}} \sum_{\substack{M_k \leq q \leq \beta(M_k)\\ q \text{ prime}}} \sum_{0\leq p \leq q-1} \alpha\left(\diam\left(J\bigcap\left[\frac{p}{q}-\psi(q),  \frac{p}{q}+\psi(q)\right]\right)\right)\\
& \sim& \sum_{J\in \mathcal{I}_{k-1}} |J| \sum_{\substack{M_k \leq q \leq \beta(M_k)\\ q \text{ prime}}} q \alpha(\psi(q)).    
\end{IEEEeqnarray*}
From assumption \eqref{eq:alpha def}, $\alpha(\psi(q)) = q^{-2}$. Therefore,
\[\sum_{J \in \mathcal{I}_{k-1}} |J| \sum_{\substack{M_k \leq q \leq \beta(M_k)\\ q \text{ prime}}} q \alpha(\psi(q)) = \sum_{J\in \mathcal{I}_{k-1}} |J| \sum_{\substack{M_k \leq q \leq \beta(M_k)\\ q \text{ prime}}} q^{-1}.\]
By choosing $\beta(M_k) = M_k^{\gamma}$ where $\gamma > 1$ is some positive number we get
\[\sum_{\substack{M_k \leq q \leq \beta(M_k)\\ q \text{ prime}}} q^{-1} \sim \log \log{M_k^{\gamma}} -  \log \log M_k = \log \gamma.\]
Consequently,
\begin{IEEEeqnarray*} {rCl}
\sum_{J\in \mathcal{I}_{k-1}} |J| \sum_{\substack{M_k \leq q \leq \beta(M_k)\\ q \text{ prime}}} q^{-1} &\sim& \sum_{J \in \mathcal{I}_k} |J|\\
& \lesssim & \sum_{\substack{M_{k-1} \leq q \leq \beta(M_{k-1})\\ q \text{ prime}}} \sum_{0 \leq p \leq q-1} \psi(q)\\
& = & \sum_{\substack{M_{k-1} \leq q \leq \beta(M_{k-1})\\ q \text{ prime}}} q \psi(q).
\end{IEEEeqnarray*}
Recall that $\psi(q) \approx q^{-\tau}$, so
\begin{IEEEeqnarray*} {rCl}
\sum_{\substack{M_{k-1} \leq q \leq \beta(M_{k-1})\\ q \text{ prime}}} q \psi(q) & \approx & \sum_{\substack{M_{k-1} \leq q \leq \beta(M_{k-1})\\ q \text{ prime}}} q^{-\tau+1}\\
& \lesssim & M_{k-1}^{-\tau+2}.
\end{IEEEeqnarray*}
The exponent $-\tau + 2 < 0$. Hence, if $k$ is chosen to be sufficiently large, we have 
\[\sum_{U \in \mathcal{U}} \alpha(U) < \epsilon,\]
as desired.
\end{proof}
\bibliographystyle{myplain}
\bibliography{Sharp_Fourier_Decay}
\end{document}